\theoremstyle{plain}
\newtheorem{tw}{Theorem}[section]
\newtheorem {lem} [tw]{Lemma}
\newtheorem {prop}[tw] {Proposition}
\newtheorem{cor}[tw]{Corollary}
\newtheorem{con}[tw]{Conjecture}
\newtheorem*{mainthm}{Theorem}
\theoremstyle{definition}
\newtheorem {deft}[tw] {Definition}
\newtheorem {rem} [tw]{Remark}
\newcommand{\cst}{\ifmmode\mathrm{C}^*\else{$\mathrm{C}^*$}\fi}
\newcommand{\bc} {\Bbb C}
\newcommand{\bn}{\Bbb N}
\newcommand{\br}{\Bbb R}
\newcommand{\alg} {\mathsf{A}}
\newcommand{\Hil}{\mathsf{H}}
\newcommand{\blg}{\mathsf{B}}
\newcommand{\oned}{\{1,\ldots,d\}}
\newcommand{\FockqH}{\mathcal{F}_q(\Hil_U)}
\newcommand{\FockqHalg}{\FockqH_{\textup{alg}}}
\newcommand{\mlg}{\mathsf{M}}
\newcommand{\cross}{\textup{cross}}
\newcommand{\inv}{\textup{inv}}
\newcommand{\ot}{\otimes}
\numberwithin{equation}{section}
\begin{document}


\author{Manish Kumar}  
\address{Institute of Mathematics of the Polish Academy of Sciences,
	ul.~\'Sniadeckich 8, 00--656 Warszawa, Poland}
\email{mkumar@impan.pl}
\author{Adam Skalski}
\email{a.skalski@impan.pl}
\author{Mateusz Wasilewski}
\email{mwasilewski@impan.pl}

\title{Full solution of the factoriality question for  $q$-Araki-Woods von Neumann algebras via conjugate variables}

\begin{abstract}
We establish factoriality  of $q$-Araki-Woods von Neumann algebras (with the number of generators at least two)  in full generality, exploiting the approach via conjugate variables developed recently in the tracial case by Akihiro Miyagawa and Roland Speicher, and abstract results of Brent Nelson. We also establish non-injectivity and determine the type of the factors in question. The factors are solid and full when the number of generators is finite.
\end{abstract}

\subjclass[2010]{Primary: 46L36; Secondary  46L10, 46L53, 46L65}

\keywords{$q$-Araki-Woods von Neumann algebra, factoriality; conjugate variables; full factors}

\maketitle

The history of $q$-deformations in the context of (Gaussian) von Neumann algebras started with the paper \cite{BKS}, where the authors defined the so-called $q$-von Neumann algebras $\Gamma_q(\Hil_\br)$, with $q\in (-1,1)$, acting on the $q$-deformed Fock space associated with the complexification of a real Hilbert space $\Hil_\br$. These algebras are tracial, and could be seen as natural deformations of the free group factors. Thus from the beginning it was a natural question whether they also have trivial centres (i.e.\ are factors) -- naturally excluding the case of $\dim(\Hil_\br) =1$, when $\Gamma_q(\Hil_\br)$ is generated by a single self-adjoint operator. The original paper showed that this is indeed true for infinite-dimensional $\Hil_\br$. It took almost ten years to establish factoriality in full generality, that is for $\dim(\Hil_\br) \geqslant 2$. This was achieved by \'E. Ricard in \cite{Eric}, after earlier partial results obtained in \cite{Sniady} and \cite{Ilona}.

Meanwhile in \cite{Hiai} another von Neumann algebraic $q$-deformation was constructed by F.\,Hiai, who combined the construction of \cite{BKS} with the quasi-free (Araki-Woods) deformation of D. Shlyakhtenko (\cite{Dima}). This time the initial data involves not only the real Hilbert space $\Hil_\br$, but also a group $(U_t)_{t \in \br}$ of orthogonal transformations of $\Hil_\br$, and the resulting von Neumann algebras $\Gamma_q(\Hil_\br, U_t)$ are in general non-tracial (see the formal definition in the beginning of the next section). 
Also here the factoriality question has attracted a lot of interest, starting from the original article \cite{Hiai}, and later continued in \cite{BM}, \cite{SW} and \cite{BMRW}. In the last of these P.\,Bikram, K.\,Mukherjee,  \'E. Ricard and S.\,Wang establish factoriality 
whenever $\dim(\Hil_\br) \geqslant 3$ and also for $\dim(\Hil_\br) = 2$, but only when the deformation generated by $(U_t)_{t \in \br}$ is sufficiently large. It should be also noted that \cite{Brent0} shows that for finite dimensional $\Hil_\br$ and  $|q|$ small enough $\Gamma_q(\Hil_\br, U_t)$  is isomorphic to $\Gamma_0(\Hil_\br, U_t)$, so in that case already the results of \cite{Dima} yield factoriality. 

Here we resolve the matter in full, showing that for arbitrary $q\in (-1,1)$ the algebra $\Gamma_q(\Hil_\br, U_t)$ is a factor if and only if $\dim(\Hil_\br) \geq2$. We take a very different approach to that taken in \cite{BM} and \cite{BMRW}, and instead of studying `mixing subspaces' of $\Gamma_q(\Hil_\br, U_t)$ we adopt a very recent approach to  `dual' or `conjugate variables' developed in the tracial case of $\Gamma_q(\Hil_\br)$ in \cite{MS} by A.\,Miyagawa and R.\,Speicher. It turns out that the construction of \cite{MS} can be also extended to the non-tracial case, which in conjunction with the general study of consequences of the existence of suitable generators with finite free Fisher information conducted in \cite{Brent} allows us not only to settle completely the factoriality question, but also establish the non-injectivity and determine the type of the associated factor. Thus we obtain the following theorem, which is the main result of this paper.

\begin{mainthm}
Let $q \in (-1,1)$, let $\Hil_\br$ be a real Hilbert space of dimension at least $2$, and let $(U_t)_{t \in \br}$ be a group of orthogonal transformations of $\Hil_\br$.
Then the $q$-Araki-Woods von Neumann algebra $\Gamma_q(\Hil_\br, U_t)$ is a non-injective factor of type  
\[
\left\{\begin{array}{ll} \mathrm{III}_1 & \text{ if } G=\mathbb{R}^{\times}_{\ast}, \\ \mathrm{III}_{\lambda} & \text{ if } G= \lambda^{\mathbb{Z}}, 0<\lambda<1, \\ \mathrm{II}_1 & \text{ if } G=\{1\},  \end{array}   \right.
\]	
where $G< \mathbb{R}^{\times}_{\ast}$ is the closed subgroup generated by the spectrum of the generator of  $(U_t)_{t \in \br}$.	If $\dim (\Hil_\br)< \infty$ then these factors are solid and full.
\end{mainthm}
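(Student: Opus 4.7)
The plan is to follow the strategy announced in the introduction: produce Voiculescu--Shlyakhtenko-type conjugate variables for a natural set of generators of $\Gamma_q(\Hil_\br, U_t)$ in the sense of Nelson's non-tracial free Fisher information \cite{Brent}, and then let Nelson's abstract results together with standard modular theory do the rest.

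First I would fix an orthonormal set of analytic vectors in $\Hil_\br$ adapted to the infinitesimal generator $A$ of the complexified group $U_t = A^{it}$ acting on $\Hil := \Hil_\br \otimes_\br \bc$, chosen so that the $q$-deformed creation and annihilation operators and their formal adjoints behave well on the polynomial $q$-Fock space. The natural generators $X_i$ of the algebra are then the corresponding $q$-Araki--Woods field operators $X_i = l_q(e_i) + l_q(e_i)^*$, and the vacuum vector $\Omega$ implements the distinguished quasi-free state $\varphi$.

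The central step, and also the main obstacle, is to extend the conjugate-variable construction of Miyagawa and Speicher from the tracial setting \cite{MS} to the present non-tracial one. In the tracial case they write the conjugate variable to $X_i$ as an explicit convergent series in $q$-combinatorial data obtained from iterated free difference quotients of the generators. In our situation the inner product on $\FockqH$ is twisted by $A$, so the $q$-Wick relations couple creation and annihilation operators with $A$, and Voiculescu's notion of conjugate variable has to be replaced by Nelson's. I would therefore write down a $q$-analogue of Nelson's conjugate-variable formula in which each occurrence of a basis vector in the Miyagawa--Speicher series carries the appropriate spectral weight of $A$, and then verify that the resulting series converges in $\Ltwo(\Gamma_q(\Hil_\br,U_t),\varphi)$ and realizes Nelson's defining identity. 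The hard part is precisely this $\Ltwo$-convergence: one has to sharpen the combinatorial estimates of \cite{MS}, uniformly for $|q|<1$, in order to absorb the additional $A$-weights and the non-trivial modular cocycle.

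Once finite free Fisher information of $(X_i)$ is established, the results of \cite{Brent} apply essentially mechanically. Whenever $\dim(\Hil_\br)\geq 2$ they force $\Gamma_q(\Hil_\br,U_t)$ to be a non-injective factor, and for $\dim(\Hil_\br)<\infty$ to be full and solid. To pin down the type, knowing the algebra is a factor allows one to read off Connes' $S$-invariant from the modular automorphism group $\sigma_t^{\varphi}$, which on $\Gamma_q(\Hil_\br,U_t)$ coincides with the second quantization $\Gamma_q(U_{-t})$; the point/Arveson spectrum of this action is precisely the closed multiplicative subgroup $G \leq \mathbb{R}^{\times}_{\ast}$ generated by the spectrum of $A$, leading immediately to the stated $\mathrm{II}_1$ / $\mathrm{III}_\lambda$ / $\mathrm{III}_1$ trichotomy.
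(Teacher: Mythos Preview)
Your overall strategy---produce conjugate variables \`a la Miyagawa--Speicher in the deformed setting and then invoke Nelson---matches the paper's core idea for the finite-dimensional case, and your description of the technical adaptation (twisting the MS series by the $A$-weights and controlling convergence) is essentially what the paper carries out in Section~\ref{sec:dc}, albeit via dual variables first rather than conjugate variables directly.  However, there are three genuine gaps in the proposal.

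\textbf{Infinite dimensions.}  Nelson's Theorems A and B in \cite{Brent} require a \emph{finite} generating set of modular eigenoperators.  Your conjugate-variable construction therefore only treats $\dim(\Hil_\br)<\infty$, yet the statement claims factoriality, non-injectivity, and the type classification for arbitrary $\Hil_\br$.  The paper handles this by first using \cite{SW} to reduce to the almost periodic part, then citing \cite{Hiai} and \cite{BMRW} (or an inductive-limit argument) for the infinite-dimensional almost periodic case, and \cite[Theorem 8.1]{BM} for the type when a weakly mixing part is present.  None of this is covered by ``apply \cite{Brent} mechanically''.

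\textbf{Solidity does not come from Nelson.}  Nothing in \cite{Brent} yields solidity.  The paper obtains it separately: it uses Kuzmin's result \cite{Kuz} that the Cuntz--Toeplitz algebra $T_q(\Hil)$ is nuclear, runs the argument of \cite[Section 4]{Dima2} to deduce the (AO) property for $\mlg$, and then invokes \cite[Theorem A]{HR} to get $\omega$-solidity.  This is an independent ingredient you have not mentioned.

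\textbf{Fullness in type $\mathrm{III}_1$.}  Nelson's Theorem B gives fullness only for type $\mathrm{III}_\lambda$, $0<\lambda<1$ (and absence of property~$\Gamma$ for the centralizer).  For $\mathrm{III}_1$ the paper needs the extra input just described: once $\mlg$ is known to be $\omega$-solid and its centralizer $\mlg^\varphi$ is a non-injective $\mathrm{II}_1$ factor (from Theorems~\ref{Thm:Nelson} and \ref{Thm:Nelson2}), \cite[Proposition 3.10]{HR} is what forces fullness.  Your claim that fullness is automatic from \cite{Brent} is incorrect in this case.

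A smaller point: to apply \cite{Brent} you need generators that are \emph{eigenoperators} of $\sigma^\varphi$ forming a self-adjoint set, not the self-adjoint field operators $X_i$ associated to an orthonormal basis of $\Hil_\br$.  The paper arranges this via a base-change lemma (Lemma~\ref{lem:basechange}) together with Hiai's choice of basis in the complexification; your ``analytic vectors adapted to $A$'' should be sharpened accordingly.
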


Note in particular that if the weakly mixing part of $(U_t)_{t \in \br}$ is non-trivial, the factor $\Gamma_q(\Hil_\br, U_t)$ is always of type $\mathrm{III}_1$, as shown already in \cite[Theorem 6.1]{BM}.

The plan of the paper is as follows: in the remainder of the introduction we set some notation. In Section \ref{sec:dc}, treating solely the case of finite-dimensional initial space, we introduce the notions of dual/conjugate variables and establish their existence for $q$-Gaussian systems inside $q$-Araki-Woods algebras. Then in Section \ref{sec:main} we establish the main results of the paper; in particular Theorem above is a combination of Theorems \ref{thm:factor}, \ref{thm:noninj} and \ref{thm:full}. Finally we record a non-isomorphism result for $q$-Gaussian von Neumann algebras for a respectively finite-dimensional and infinite-dimensional initial Hilbert space and propose a related conjecture in the general case.

Throughout the paper we fix $q \in (-1,1)$. All scalar products are linear on the right. Given a real Hilbert
space $\Hil_\br$ and $(U_t)_{t \in \br}$,  a group of orthogonal transformations of $\Hil_\br$, we denote  the associated $q$-Araki-Woods von Neumann algebra by $\Gamma_q(\Hil_\br, U_t)$.  For a detailed description of the construction of  $\Gamma_q(\Hil_\br, U_t)$ we refer to the original article \cite{Hiai}.

\section{Dual and conjugate variables for $q$-Araki-Woods von Neumann algebras in finite dimensions}

\label{sec:dc}

In this section we will consider only the case of finite-dimensional $\Hil_\br$.
Fix then $d \in \bn$ and write $\Hil$ for the complexification of $\Hil_\br=\br^d$. We assume that we are also given 
$(U_t)_{t \in \br}$, a group of orthogonal transformations of $\br^d$, whose generator (both on $\Hil_\br$ and on $\Hil$) will be denoted by $A$. The space $\Hil$ is thus equipped both with the standard scalar product and with the deformed scalar product $\langle \xi, \eta \rangle_U:= \langle \xi, \frac{2A}{1+A} \eta \rangle$; if we want to stress the difference we will sometimes use the notation $\Hil_U$. Further we write $\FockqH$ for the associated $q$-Fock space, 
with $\FockqHalg$ as the subspace spanned by finite tensors, and $e_0=\Omega$ the vacuum vector.  Given $\xi \in \Hil_U$ we can consider the associated left creation operator $l^*(\xi) \in B(\FockqH)$, and further the relevant Gaussian operator by $s(\xi):=l^*(\xi) + (l^*(\xi))^*$. 
We denote the associated $q$-Araki-Woods von Neumann algebra  $\Gamma_q(\Hil_\br, U_t):=\{s(\xi): \xi \in \Hil_\br\}''$  simply by $\mlg$ and the canonical $q$-quasi free state $\langle\Omega,\cdot\Omega\rangle_{\mathcal{F}_q(H_U)}$ by $\varphi$. Finally for each $\xi\in \FockqHalg$, we denote by $W(\xi)$ the unique element in $\mlg$  which satisfies $W(\xi)\Omega=\xi$.

 Let $\{e_1,\ldots, e_d\}$ be a linearly independent set of vectors in $\Hil$, and for $i \in \oned$  let $A_i=W(e_i)$, i.e.\ $A_i \in \mlg$ and $A_i\Omega=e_i$.  We say that a tuple $(D_1,\ldots, D_d)$ of unbounded operators on $\mathcal{F}_q(H_U)$ with $\FockqHalg$ contained in their domains and $\mathds{1}$ contained in the domains of their adjoints is a {\em (normalized) dual system} for $(A_1,\ldots, A_d)$ if for all $i, j \in \oned$ \[{[D_i,A_j]}=\langle\bar{e}_j, e_i\rangle_U P_{\bc \Omega}=\varphi(A_jA_i)P_{\bc \Omega}\;\;\mbox{ and } D_i\Omega=0.\]
Here $\bar{\xi}$ denotes the usual conjugate of a vector $\xi$ in $\mathbb{C}^d$ and $P_{\mathbb{C}\Omega}$ denotes the projection onto the one-dimensional subspace $\mathbb{C}\Omega$.  Before we proceed any further, let us note that existence of dual variables implies existence of conjugate variables. We will actually show directly that the existence of dual variables implies existence of the conjugate variables with respect to the quasi-free difference quotients (see \cite[Definition 3.11]{Brent}), which in turn implies existence of the usual conjugate variables (see \cite[Remark 3.13]{Brent}). Note also here that as in \cite{Brent} we do not require that individual $A_i$ are self-adjoint -- although in  Corollary \ref{cor:freeFisher} below we will be concerned with the situation where they form a self-adjoint set.

Recall that the \emph{quasi-free difference quotients} $\partial_{i}$ are defined as unique derivations from $\mathbb{C}[A_i,\dots, A_d]$ into $\mlg \overline{\otimes} \mlg^{op}$ such that $\partial_i(A_j) := \varphi(A_j A_i)\mathds{1}\otimes \mathds{1}$ for all $i,j\in \oned$. The \emph{conjugate variable} for $\partial_i$ will be a vector $\xi_i \in L^{2}(\mlg,\varphi)$ such that 
\[
\langle \xi_i, x\mathds{1}\rangle = \langle \mathds{1}\otimes \mathds{1}, \partial_i(x)(\mathds{1}\otimes \mathds{1})\rangle 
\]
for all $x \in \mathrm{dom}(\partial_i)$.

\begin{prop}[{See \cite[Theorem 2.5]{MS}}]
Suppose that $(D_1,\dots, D_d)$ is a normalized dual system for $(A_1,\dots, A_d)$. Then $(D_{1}^{\ast}\mathds{1},\dots, D_{d}^{\ast}\mathds{1})$ are conjugate variables for $(A_1,\dots, A_d)$.
\end{prop}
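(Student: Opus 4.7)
The plan is to verify the defining identity $\langle D_i^{\ast}\mathds{1}, x\mathds{1}\rangle = \langle \mathds{1}\otimes \mathds{1}, \partial_i(x)(\mathds{1}\otimes \mathds{1})\rangle$ for an arbitrary monomial $x = A_{j_1}\cdots A_{j_n}$ in $\bc[A_1,\dots,A_d]$; linearity in $x$ then yields the identity on the full domain of $\partial_i$. On the left hand side I would rewrite the expression as $\langle \mathds{1}, D_i A_{j_1}\cdots A_{j_n}\mathds{1}\rangle$ and move $D_i$ to the right through the product by iterated use of the Leibniz rule for the commutator together with the hypothesis $[D_i, A_{j_k}] = \varphi(A_{j_k}A_i) P_{\bc\Omega}$; the boundary term $A_{j_1}\cdots A_{j_n} D_i\mathds{1}$ vanishes by the normalisation $D_i\mathds{1} = 0$. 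Once the rank one projection $P_{\bc\Omega}$ inside the $k$-th commutator term collapses the tail $A_{j_{k+1}}\cdots A_{j_n}\mathds{1}$ to the scalar $\varphi(A_{j_{k+1}}\cdots A_{j_n})\mathds{1}$, that term contributes $\varphi(A_{j_k}A_i)\,\varphi(A_{j_{k+1}}\cdots A_{j_n})\,\varphi(A_{j_1}\cdots A_{j_{k-1}})$ to the final pairing with $\mathds{1}$.

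For the right hand side I would expand via the Leibniz rule; since $\partial_i(A_{j_k}) = \varphi(A_{j_k}A_i)\,\mathds{1}\otimes \mathds{1}$ is a scalar multiple of the unit and therefore splits out of the surrounding product, the expansion collapses to $\partial_i(A_{j_1}\cdots A_{j_n}) = \sum_{k=1}^n \varphi(A_{j_k}A_i)\, A_{j_1}\cdots A_{j_{k-1}} \otimes A_{j_{k+1}}\cdots A_{j_n}$. Evaluating at $\mathds{1}\otimes\mathds{1}$ and pairing with $\mathds{1}\otimes \mathds{1}$ in the inner product on $\mlg \wot \mlg^{\textup{op}}$ used in \cite{Brent}, which factorises as $\langle \mathds{1}\otimes \mathds{1}, (a\otimes b^{\textup{op}})(\mathds{1}\otimes \mathds{1})\rangle = \varphi(a)\varphi(b)$, reproduces exactly the same sum $\sum_{k=1}^n \varphi(A_{j_k}A_i)\,\varphi(A_{j_1}\cdots A_{j_{k-1}})\,\varphi(A_{j_{k+1}}\cdots A_{j_n})$, so the two sides coincide term by term.

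There is no serious obstacle to this strategy: the axioms of a (normalised) dual system are calibrated precisely so that the commutator expansion on the Hilbert space side mirrors the Leibniz expansion of $\partial_i$ on the bimodule side, after both are sandwiched between copies of $\mathds{1}$. The only delicate point specific to the non-tracial Araki-Woods setting -- and the place where one has to check that the tracial argument of \cite[Theorem 2.5]{MS} really does transfer -- is to confirm that the inner product on $\mlg \wot \mlg^{\textup{op}}$ relative to which the conjugate variable is defined in \cite[Definition 3.11]{Brent} splits as a product of $\varphi$-evaluations on the two legs; this is built into Nelson's framework for conjugate variables with respect to the quasi-free difference quotients, and once it is acknowledged the Miyagawa--Speicher proof runs verbatim.
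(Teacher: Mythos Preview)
Your proposal is correct and follows essentially the same argument as the paper: both verify the defining identity on monomials $A_{j_1}\cdots A_{j_n}$ by commuting $D_i$ through the product using $[D_i,A_{j_k}]=\varphi(A_{j_k}A_i)P_{\bc\Omega}$, killing the boundary term via $D_i\mathds{1}=0$, and matching the resulting sum $\sum_k \varphi(A_{j_k}A_i)\varphi(A_{j_1}\cdots A_{j_{k-1}})\varphi(A_{j_{k+1}}\cdots A_{j_n})$ against the Leibniz expansion of $\partial_i$. Your remark about the factorisation of the inner product on $\mlg\wot\mlg^{\textup{op}}$ is the only point the paper leaves implicit, but otherwise the two proofs are the same.
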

\begin{proof}
It suffices to check that for all $i \in \oned$, $n \in \bn$ and $j_1, \ldots,j_n \in \oned$ we have $\langle D_{i}^{\ast}\mathds{1}, A_{j_{1}}\dots A_{j_{n}}\mathds{1}\rangle = \langle \mathds{1}\otimes \mathds{1}, \partial_i(A_{j_{1}}\dots A_{j_{n}})(\mathds{1}\otimes \mathds{1})\rangle$. The left-hand side is equal to $\langle \mathds{1}, D_{i} A_{j_{1}}\dots A_{j_{n}}\mathds{1}\rangle$. The defining property of $D_i$ says that $D_i A_{j_1} = A_{j_1} D_i + \varphi(A_{j_{1}} A_{i}) P_{\Omega}$. It follows that 
\begin{align*}
\langle \mathds{1}, D_{i} A_{j_{1}}\dots A_{j_{n}}\mathds{1}\rangle &= \langle \mathds{1}, A_{j_{1}} D_{i}\dots A_{j_{n}}\mathds{1}\rangle + \varphi(A_{j_{1}} A_{i}) \langle \mathds{1}, A_{j_{2}}\dots A_{j_{n}}\mathds{1}\rangle \\
&= \langle \mathds{1}, A_{j_{1}} D_{i}\dots A_{j_{n}}\mathds{1}\rangle + \varphi(A_{j_{1}} A_{i}) \varphi( A_{j_{2}}\dots A_{j_{n}})
\end{align*}
Continuing in this way we will obtain the final formula:
\[
\langle \mathds{1}, D_{i} A_{j_{1}}\dots A_{j_{n}}\mathds{1}\rangle = \sum_{k=1}^{n}\varphi(A_{j_{k}} A_{i}) \varphi(A_{j_{1}}\dots A_{j_{k-1}})\varphi(A_{j_{k+1}}\dots A_{j_{n}}) + \langle \mathds{1}, A_{j_{1}}\dots A_{j_{n}}D_i \mathds{1}\rangle,
\]
where the last term vanishes as $D_i \mathds{1}=0$. This is equal to $\langle \mathds{1}\otimes \mathds{1}, \partial_i(A_{j_1}\dots A_{j_{n}})(\mathds{1}\otimes \mathds{1})\rangle$, because the value $\partial_i(A_{j_1}\dots A_{j_{n}})$ can be computed exactly as for free difference quotients, merely replacing Kronecker deltas $\delta_{i j_k}$ with the covariance $\varphi(A_{j_{k}} A_i)$.
\end{proof}

\begin{lem}\label{lem:basechange}
	Let $\{e_i\}_{1\leqslant i\leqslant d}$ and $\{f_i\}_{1\leqslant i\leqslant d}$ be two linearly independent sets in $\Hil$ such that for every $j \in \oned$ we have $f_j=\sum_{k=1}^dx_{jk}e_k$ for some $x_{jk}\in \mathbb{C}$. If $A_i=W(e_i)$ and $C_i=W(f_i)$, $i \in \oned$, then  a dual system for $\{A_i\}_{1\leqslant i\leqslant d}$ exists if and only if one for $\{C_{i}\}_{1\leqslant i\leqslant d}$ does. 
\end{lem}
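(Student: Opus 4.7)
The plan is to write down explicitly, in terms of the change-of-basis matrix $X = (x_{jk})$, a bijective correspondence between dual systems for the two tuples. The starting point is that the map $W$ is linear on the one-particle space, so from $f_j = \sum_{k=1}^d x_{jk} e_k$ I obtain immediately
\begin{equation*}
C_j = W(f_j) = \sum_{k=1}^d x_{jk}\, W(e_k) = \sum_{k=1}^d x_{jk}\, A_k.
\end{equation*}
Since both $\{e_i\}$ and $\{f_i\}$ are linearly independent sets of $d$ vectors in the $d$-dimensional space $\Hil$, each is a basis and $X$ is invertible. By symmetry it is therefore enough to construct a dual system for $\{C_i\}$ out of one for $\{A_i\}$; the reverse direction is obtained by applying the same procedure to $X^{-1}$.

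Given a normalized dual system $(D_1,\ldots,D_d)$ for $(A_1,\ldots,A_d)$, I propose to set $E_i := \sum_{m=1}^d x_{im} D_m$. The domain conditions and the equality $E_i \Omega = 0$ are preserved under linear combinations, so the whole content of the proof reduces to verifying the commutator identity $[E_i, C_j] = \langle \bar{f}_j, f_i\rangle_U P_{\bc \Omega}$ for all $i,j \in \oned$. Expanding the left-hand side,
\begin{equation*}
[E_i, C_j] = \sum_{m,k=1}^d x_{im}\, x_{jk}\, [D_m, A_k] = \sum_{m,k=1}^d x_{im}\, x_{jk}\, \langle \bar{e}_k, e_m\rangle_U\, P_{\bc \Omega},
\end{equation*}
while, using that complex conjugation on $\Hil = \bc^d$ fixes $\Hil_\br$ so that $\bar{f}_j = \sum_k \bar{x}_{jk}\, \bar{e}_k$, and exploiting conjugate-linearity of $\langle\cdot,\cdot\rangle_U$ in the first slot,
\begin{equation*}
\langle \bar{f}_j, f_i\rangle_U = \sum_{k,m=1}^d \overline{\bar{x}_{jk}}\, x_{im}\, \langle \bar{e}_k, e_m\rangle_U = \sum_{k,m=1}^d x_{jk}\, x_{im}\, \langle \bar{e}_k, e_m\rangle_U,
\end{equation*}
which matches the previous display. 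Applying the same construction with $X^{-1}$ in place of $X$ recovers $(D_i)$ from $(E_i)$, so existence of one dual system is equivalent to existence of the other.

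I do not anticipate any serious obstacle; the only point deserving attention is that the conjugation in $\bar{f}_j$ cancels against the conjugate-linearity of the inner product in the first slot, so that both sides depend bilinearly (not sesquilinearly) on $X$. This is exactly what makes the naive ansatz $E_i = \sum_m x_{im} D_m$, using the same coefficients that define $C_i$, give the correct answer.
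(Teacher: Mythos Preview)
Your proof is correct and follows exactly the same route as the paper: define $E_i=\sum_m x_{im}D_m$ and verify the commutator identity by bilinearity, using that the conjugation in $\bar f_j$ cancels the conjugate-linearity in the first slot of $\langle\cdot,\cdot\rangle_U$. Your version is in fact slightly more explicit, spelling out the invertibility of $X$, the domain conditions, and the cancellation of the complex conjugates.
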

\begin{proof}
	Note that the definition of Wick operators assures that $C_j=\sum_{k=1}^d{x_{jk}}A_k$, $j \in \oned$.    If $\{D_i\}_{1\leqslant i\leqslant d}$ denotes the dual system for $\{A_i\}_{1\leqslant i\leqslant d}$, then $\{E_i\}_{1\leqslant i\leqslant d}$ is the dual system for $\{C_i\}_{1\leqslant i\leqslant d}$, where for each $i \in \oned$ we set $E_i=\sum_{k=1}^dx_{ik}D_k.$
	Indeed, let us check:
\begin{align*} [E_i, C_j] &= \sum_{k=1}^d \sum_{l=1}^d x_{ik} x_{jl} [D_k, A_l] = 
\sum_{k=1}^d \sum_{l=1}^d x_{ik} x_{jl} \langle\bar{e}_l, e_k\rangle_U  P_{\bc \Omega}\\&=  \langle\overline{ \sum_{l=1}^d x_{jl}e_l}, \sum_{k=1}^dx_{ik}  e_k\rangle_U P_{\bc \Omega} = \langle\bar{f}_j, f_i\rangle_U P_{\bc \Omega}. \end{align*}
\end{proof}

\subsection*{Dual variables}
Fix then $\{e_1,\ldots, e_d\}$, an orthonormal set in $\Hil$ with respect to the undeformed scalar product and as before let $A_i=W(e_i)$.
For $i, j \in \oned$ set $B_{ij}=\langle \overline{e}_i,e_j\rangle_U.$ Denote by $[d]^*$ the set of words in letters from the alphabet $\{1,\ldots,d\}$ and for any word $w=j_n\ldots j_1\in [d]^*$, define $e_{j_n\ldots j_1}=e_{j_n}\otimes\cdots\otimes e_{j_1}$.
 One notes that $W(\xi)=l_{\bar{\xi}}+l_{{\xi}}^*$ for any $\xi\in H$ where $l_\xi^*$ is the creation operator; this is a very easy instance of the general Wick product formula (see for example \cite[Proposition 2.12]{ABW}).  Hence we have $A_i(e_{j_n\ldots j_1})=e_{ij_n\ldots j_1}+\sum_{k=1}^nq^{n-k}B_{ij_k} e_{j_{n}\ldots  \hat{j}_k\ldots j_1}$, where $\hat{j}_k$ means to omit $j_k$.

The aim is to exploit the results of \cite{Brent}; to that end we want to first define for each $i \in \oned$ operators
$D_i: \FockqHalg \to \FockqHalg$ such that
\[ D_i \Omega = 0, \;\;\; [D_i, A_j] = B_{ji} P_{\bc \Omega}, \;\;\; j \in \oned.\]
We use below the notation of \cite[Section 4]{MS}, both in the formulation and in the proofs; in particular $B(n+1)$ appearing the following lemma denotes a collection of partitions introduced after \cite[Example 4.3]{MS}. We will just note the places where the arguments need to be extended or modified.

\begin{lem}
	The algebraic formula for the dual variables is given as follows ($i \in \oned$, $n \in \bn$, $j_1, \ldots,j_n \in \oned$):
	\[D_i (e_{j_n} \ldots e_{j_1}) = \sum_{\pi \in B(n+1)} (-1)^{{\pi(0)-1}} q^{\cross(\pi)} \delta_{p(\pi)}^B e_{s(\pi)}, \]
	where $\delta_{p(\pi)}^B:= \prod_{\underset{{l>m}}{(l,m)}\in \pi} B_{j_l,j_m}$.	
\end{lem}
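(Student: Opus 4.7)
The plan is to adapt the combinatorial strategy of Miyagawa and Speicher in \cite[Section 4]{MS} --- which was developed for the tracial $q$-Gaussian case --- to the present non-tracial setting, where the Kronecker deltas $\delta_{ij}$ appearing there are replaced by the (non-symmetric) coefficients $B_{ij}=\langle \bar e_i, e_j\rangle_U$. The displayed formula for $D_i$ is obtained precisely by this substitution into the MS formula, so the natural approach is to re-run their argument and check at each step that the replacement remains coherent.

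First, I would take the stated formula as the definition of $D_i$ on $\FockqHalg$, setting $D_i\Omega=0$ separately (the sum indexed by $B(1)$ is empty under the conventions of \cite{MS}). The commutation relation $[D_i,A_j]=B_{ji}P_{\bc\Omega}$ would then be established by induction on the length $n$ of the tensor $e_{j_n}\cdots e_{j_1}$. Using the expansion
$A_j(e_{j_n}\cdots e_{j_1})=e_{jj_n\cdots j_1}+\sum_{k=1}^n q^{n-k}B_{jj_k}\, e_{j_n\cdots \hat{j}_k\cdots j_1}$ recorded just before the lemma, together with the formula for $D_i$, both $D_iA_j(e_{j_n}\cdots e_{j_1})$ and $A_jD_i(e_{j_n}\cdots e_{j_1})$ become sums indexed by certain partitions on $\{0,1,\ldots,n+1\}$. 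The argument then consists in exhibiting a sign-reversing involution on these partitions that matches terms on one side with their counterparts on the other, leaving as residual contribution only the partition in which vertex $0$ is paired directly with the vertex introduced by $A_j$. This residual term evaluates to $B_{ji}$ times the vacuum and survives only when $n=0$, which is the claim.

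The main obstacle will be keeping track of the ordering convention $l>m$ in $\delta^B_{p(\pi)}$. In \cite{MS} the analogous factor is $\delta_{j_lj_m}$, which is symmetric in $l$ and $m$, so the two endpoints of a pair play interchangeable roles. Here they do not: an annihilation step from $A_j$ on one side of the commutator must produce a $B$-factor written in the same order as the pair created by $D_i$ on the other side. Verifying this compatibility --- in particular confirming that the residual coefficient is $B_{ji}$ and not $B_{ij}$ --- requires a careful inspection of how vertex $0$ interacts with the newly introduced vertex, and of how the sign $(-1)^{\pi(0)-1}$ and the crossing count $\cross(\pi)$ transform under the involution. Once this order-tracking is in place, the cancellations among signs and $q$-weights follow exactly the pattern of the tracial proof, and no genuinely new combinatorial identity is needed. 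The lemma itself is thus essentially a bookkeeping upgrade of \cite[Section 4]{MS}; its value lies in the fact that the same $D_i$ will then be shown, via the preceding proposition, to yield conjugate variables in the sense of \cite{Brent}, which is what feeds into the factoriality argument of the next section.
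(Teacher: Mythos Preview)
Your proposal is correct and matches the paper's approach: both take the displayed formula as the definition of $D_i$ and verify the commutation relation by re-running the combinatorics of \cite[Section 4]{MS} with $\delta_{ij}$ replaced by $B_{ij}$, the only new care being the ordering $l>m$ in $\delta^B_{p(\pi)}$ so that the annihilation contribution $B_{j_{n+1},j_l}$ from $A_{j_{n+1}}$ matches $\delta^B_{p(\sigma')}$ after the pair $(n{+}1,l)$ is inserted. One minor wording point: the paper organises the computation as a direct term-matching at each length $n$ (no induction hypothesis is invoked), and for $n\geqslant 1$ all terms cancel rather than leaving a single residual partition --- the pairing of $0$ with the new vertex you describe is what produces $B_{ji}\Omega$ in the base case $n=0$ only.
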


\begin{proof}
	Check first that 
	\[ [D_i, A_j] \Omega =D_i e_j = \sum_{\pi \in B(2)}	(-1)^{\pi(0)-1} q^{\cross(\pi)} \delta_{p(\pi)}^B e_{s(\pi)} = B_{ji} \Omega.\]
	Then we compute 
	\begin{align*}
		D_i A_{j_{n+1}} (e_{j_n} \ldots e_{j_1}) =& D_i \left(e_{j_{n+1} \ldots j_1} + \sum_{l=1}^n q^{n-l} B_{j_{n+1}, j_l}  e_{j_{n} \ldots \hat{j_l} \ldots j_1} \right)
		\\&= D_i e_{j_{n+1} \ldots j_1}  + \sum_{l=1}^n \sum_{\sigma \in B(n)}  (-1)^{\sigma(0)-1} q^{\cross(\sigma)+n-l} \delta_{p(\sigma)}^B B_{j_{n+1},j_l} e_{s(\sigma)}
	\end{align*}
	and 
	\begin{align*}
		A_{j_{n+1}} D_i(e_{j_n} \ldots e_{j_1})  =&  \sum_{\pi \in B(n+1)} (-1)^{\pi(0)-1} q^{\cross(\pi)} \delta_{p(\pi)}^B e_{j_{n+1}s(\pi)}
		\\ &+ \sum_{\pi \in B(n+1)} \sum_{k=1}^{|s(\pi)|} (-1)^{\pi(0)-1} q^{\cross(\pi)+ |s(\pi)|-k} \delta_{p(\pi)}^B B_{j_{n+1}, j_{s(\pi)_k}} e_{s(\pi)\setminus s(\pi)_k}.
	\end{align*}
	In the first step of the proof of \cite[Proposition 4.5]{MS} all terms in the last factor  of the second sum are identified with some terms  in the last factor of the first sum,  by taking a pair $(\pi,k)$ and setting $\sigma \in B(n)$ by removing the singleton $s(\pi)_k$ and putting $l =s(\pi)_k$. Then we just have to observe that 
	\[  \delta_{p(\sigma)}^B B_{j_{n+1},j_l} e_{s(\sigma)} = \delta_{p(\pi)}^B B_{j_{n+1}, j_{s(\pi)_k}} e_{s(\pi)\setminus s(\pi)_k} \]
	(and compute the crossings exactly as in \cite{MS}).
	
	After the subtracting one is left in the first sum with the following terms:
	\[D_i e_{j_{n+1} \ldots j_1}  + \sum_{l=1}^n \sum_{\sigma \in B(n): \sigma(0)\geqslant l}  (-1)^{\sigma(0)-1} q^{\cross(\sigma)+n-l} \delta_{p(\sigma)}^B B_{j_{n+1},j_l} e_{s(\sigma)}\] 
	Now to each pair $(\sigma,l)$ as above we associate $\sigma' \in B(n+2)$ by inserting a `new point' at $l$ and pairing it with $n+1$. Thus the last expression simplifies to (after counting the crossings as in \cite{MS})
	\[D_i e_{j_{n+1} \ldots j_1}   - \sum_{\sigma' \in B(n+2): \sigma'(n+1) \textup{ not a singleton}}  (-1)^{\sigma'(0)-1} q^{\cross(\sigma')} \delta_{p(\sigma')}^B  e_{s(\sigma')};\] 
	note that singletons do not change under this procedure, and $\delta_{p(\sigma)}^B B_{j_{n+1},j_l} = \delta_{p(\sigma')}^B$.
	
	The  rest of the argument is just collecting the terms. 
\end{proof}

The following is the main result of this Section.

\begin{prop} \label{prop:conjugate}
	For each $i \in \oned$ we have $e_0:=\Omega \in \textup{Dom } D_i^*$. Thus $(D_{1}^{\ast}e_0,\dots, D_{d}^{\ast}e_0)$ forms a set of conjugate variables for $(A_1,\dots, A_d)$.
\end{prop}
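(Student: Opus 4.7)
The second assertion, once the first is proved, is immediate from the earlier proposition: by the lemma we have $D_i\Omega=0$, and evaluating the pair-partition sum defining $[D_i, A_j]\xi$ on $\xi = \Omega$ yields $[D_i, A_j]\Omega = B_{ji}\Omega$, so $[D_i, A_j] = B_{ji} P_{\bc\Omega} = \varphi(A_j A_i)P_{\bc\Omega}$, making $(D_1,\dots,D_d)$ a normalized dual system. All the work therefore goes into the domain assertion.

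To show $\Omega \in \textup{Dom}(D_i^*)$, I will check that the linear functional $F_i: \FockqHalg \to \bc$, $F_i(\xi) := \langle D_i\xi, \Omega\rangle$, extends continuously to $\FockqH$. From the explicit formula for $D_i$ and orthogonality of distinct graded components, $F_i$ vanishes on $\Hil^{\otimes n}$ for $n$ even (no $\pi \in B(n+1)$ with $s(\pi)=\emptyset$ exists), while for $n$ odd only pure pair partitions of $\{0,1,\dots,n\}$ contribute, and
\[F_i(e_{j_n}\otimes\cdots\otimes e_{j_1}) = \sum_{\pi} (-1)^{\pi(0)-1} q^{\cross(\pi)}\prod_{\{l,m\}\in\pi,\,l>m} B_{j_l,j_m},\]
with the convention $j_0 := i$ and the sum over pair partitions of $\{0,1,\dots,n\}$.

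The plan is then to exhibit an explicit $\eta_i = \sum_{n\geq 0}\eta_i^{(2n+1)} \in \FockqH$ with $\eta_i^{(2n+1)}\in \Hil^{\otimes(2n+1)}$ satisfying $F_i(\xi) = \langle \xi, \eta_i\rangle_{\FockqH}$, forcing $\eta_i = D_i^*\Omega$. Following the construction of \cite[Section 5]{MS} in the tracial case, I will build $\eta_i^{(2n+1)}$ by decomposing each pair partition $\pi$ of $\{0,1,\dots,2n+1\}$ according to $k=\pi(0)\in\{1,\dots,2n+1\}$: the pair $\{0,k\}$ contributes a factor $(-1)^{k-1}q^{k-1}B_{j_k,i}$, and the residual sum over pair partitions of $\{1,\dots,2n+1\}\setminus\{k\}$ should reorganize into a $q$-deformed inner product on $\Hil^{\otimes 2n}$ against a specific tensor built recursively from $\bar{e}_i$ and the operator on $\Hil$ represented by the matrix $(B_{ij})$; one application of a left creation operator then produces the corresponding component of $\eta_i$ one grade higher.

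The main obstacle will be the norm bound $\sum_n\|\eta_i^{(2n+1)}\|_{\FockqH}^2 < \infty$. I expect this to reduce, as in \cite{MS}, to Bo\.zejko's uniform operator-norm bound on the $q$-symmetrization $P^{(m)}$ on $\Hil^{\otimes m}$ for $|q|<1$, together with boundedness of $(B_{ij})$ as an operator on $\bc^d$. The non-tracial modification essentially replaces Kronecker deltas by this finite matrix, so the estimates from \cite{MS} should transfer with only a cosmetic change of constants. Once $\eta_i \in \FockqH$ is secured, verifying $F_i(\xi) = \langle\xi,\eta_i\rangle$ on the spanning set $\{e_{j_n}\otimes\cdots\otimes e_{j_1}\}$ completes the proof.
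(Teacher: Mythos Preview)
Your overall strategy—bounding the functional $\xi\mapsto\langle\Omega,D_i\xi\rangle$ on $\FockqHalg$ and then invoking the earlier proposition—is exactly the paper's approach. The gap is combinatorial. The set $B(n+1)$ from \cite[Section~4]{MS} is \emph{not} the collection of all pair-and-singleton partitions of $\{0,\ldots,n\}$ with $0$ in a pair: it carries the additional constraint that every pair $\{a,b\}$ (with $a<b$) other than $\{0,\pi(0)\}$ satisfies $a<\pi(0)<b$. In particular, when $s(\pi)=\emptyset$ and $n+1=2m$, the value $\pi(0)=m$ is \emph{forced}, and the remaining pairs give a bijection between $\{1,\ldots,m-1\}$ and $\{m+1,\ldots,2m-1\}$, i.e.\ an element of $S_{m-1}$. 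So your proposed decomposition over $k=\pi(0)\in\{1,\ldots,2n+1\}$ and the factor $(-1)^{k-1}q^{k-1}$ do not describe the actual sum; you can check this already at $n=3$, where $\langle\Omega,D_ie_{j_3j_2j_1}\rangle=-q\,B_{j_2,i}B_{j_3,j_1}$ comes from the \emph{single} partition $\{\{0,2\},\{1,3\}\}$, not from three pair partitions.

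Once the combinatorics are corrected the argument runs as in \cite[Theorem~4.6]{MS} (not Section~5): writing each word of length $2m-1$ as $vjw'$ with $|v|=|w'|=m-1$, the sum over $S_{m-1}$ weighted by $q^{\inv(\pi')}$ is recognised as the $q$-inner product $\langle\bar e_v,e_{w'}\rangle_{\FockqH}$, and the whole functional is expressed through compositions of \emph{undeformed} free annihilation operators $\tilde L_{vj}$ acting on the input vector. The only genuinely new ingredient in the non-tracial setting is the identity $\tilde L_\xi=L_{\tilde\xi}$ with $\tilde\xi=(\tfrac{2A}{1+A})^{-1}\xi$, which supplies the operator-norm bound $\|\tilde L_{e_k}\|\leq C\,\|(\tfrac{2A}{1+A})^{-1}\|^{1/2}$; combined with $\|\bar e_v\|^2_{\FockqH}\leq E^{|v|}[|v|]_{|q|}!$ this yields a convergent series multiplying $\|\xi\|_{\FockqH}$. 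Your intuition that the tracial estimates carry over with only a change of constants is correct, but the mechanism is norm bounds on annihilators rather than an explicit graded construction of $\eta_i^{(2n+1)}$.
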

\begin{proof}
	As in  \cite[Theorem 4.6]{MS} the proof amounts to studying the expression of the form
	\[ \langle e_0, D_i (\sum_{w \in [d]^*} \alpha_w e_w) \rangle_{\mathcal{F}_q(\Hil_U)}, \]
	where the sum is finite (but arbitrary). It is easy to see that it coincides with
	\[(*):=  \sum_{m=1}^\infty \sum_{\;\;\pi \in B(2m), \pi(0)=m\;\;} \sum_{|w|=2m-1} \alpha_w (-1)^{m-1} q^{\cross(\pi)} \delta_{p(\pi), w}^B \]
where $\delta_{p(\pi),w}^B$ is written for $\delta
_{p(\pi)}^B$ in order to make the dependency on $w$ explicit.	Fix for the moment $m\geqslant 1$ and do two things at once: first rewrite each word $w$ of length $2m-1$ as 
	$vjw'$ with $v,w'$ words of length $m-1$ and $j \in \oned$, and second identify each $\pi \in B(2m)$, $\pi(0)=m$ with a permutation $\pi' \in S_{m-1}$ (exactly as in \cite[Theorem 4.6]{MS}). We then have
	\begin{align*} 
		\sum_{\pi \in B(2m), \pi(0)=m\;\;} & \sum_{|w|=2m-1} \alpha_w (-1)^{m-1} q^{\cross(\pi)} \delta_{p(\pi), w}^B
		\\&= (-1)^{m-1} q^{\frac{m(m-1)}{2}} \sum_{\pi' \in S_{m-1}\;\;} \sum_{|v|=m-1} \sum_{j=1}^d \sum_{|w'|=m-1} 
		\alpha_{vjw'}  q^{\inv(\pi')} B_{ji} \delta_{\pi'(v), w}^B, \end{align*}
 where $\delta_{\rho(v), w}^B = \prod_{l=1}^{m-1} B_{v_{\rho(l)} w_l}=\langle \overline{e}_{\rho(v)}, e_w\rangle_{\Hil_U^{\otimes |w|}}$ and $\inv(\pi')$ denotes the number of inversions of the permutation $\pi'\in S_{m-1}$. Further
	\begin{align*} 
		(-1)^{m-1} q^{\frac{m(m-1)}{2}} \sum_{\pi' \in S_{m-1}\;\;}& \sum_{|v|=m-1} \sum_{j=1}^d \sum_{|w'|=m-1} 
		\alpha_{vjw'}  q^{\inv(\pi')} B_{ji} \delta_{\pi'(v), w}^B 
		\\&= (-1)^{m-1} q^{\frac{m(m-1)}{2}} \sum_{|w'|=m-1} 
		\sum_{|v|=m-1} \sum_{j=1}^d  B_{ji} \alpha_{vjw'}  \langle  \overline{e}_v,  e_{w'}\rangle_{\FockqH} 
		\\&=  (-1)^{m-1} q^{\frac{m(m-1)}{2}} \sum_{|v|=m-1} 
		\langle  \overline{e}_v, \sum_{|w'|=m-1} \sum_{j=1}^d  B_{ji} \alpha_{vjw'} e_{w'} \rangle_{\FockqH}. 
	\end{align*}
	So now we fix $v$ of length $m-1$ and look at the vector
	$\sum_{|w'|=m-1} \sum_{j=1}^d  B_{ij} \alpha_{vjw'} e_{w'}$.
	We note that this is nothing but $\sum_{j=1}^d  B_{ij} \tilde{L}_{vj} \left(\sum_{|w'|=m-1}  \alpha_{vjw'} e_{vjw'}\right)$, where $\tilde{L}_{vj}$ denotes the composition of the $m$ relevant undeformed free annihilation operators (acting on $\FockqHalg$) of the form
	$\tilde{L}_{e_k}$ for $k \in \oned$, whose action on $\FockqHalg$ is given simply by 
	\[\tilde{L}_{e_k} (\xi_1 \ot \cdots \otimes\xi_n) = \langle e_k, \xi_1 \rangle \xi_2 \ot \cdots \otimes\xi_n. \]
Naturally we also have 
\[ \sum_{j=1}^d  B_{ij} {\tilde L}_{vj} \left( \sum_{|w'|=m-1}  \alpha_{vjw'} e_{vjw'} \right)= 
\sum_{j=1}^d  B_{ij} \tilde{L}_{vj} \left(\sum_{|w''|=2m-1}  \alpha_{w''} e_{w''}\right)\]	
where we have used the fact that the set $\{e_1,\ldots,e_d\}$ in $\Hil$ is orthonormal in the undeformed scalar product.
Set $T_{i,v}: \FockqHalg \to \FockqHalg$, $T_{i,v}:= \sum_{j=1}^d  B_{ij} \tilde{L}_{vj}$. We need to argue that $T_{i,v}$ is bounded (and estimate its norm).  Consider then 
a free left `undeformed' annihilation operator $\tilde{L}(\xi)$ for $\xi \in \Hil$. Then for any $\eta \in \Hil$ 
we have 
\[ \langle \xi, \eta \rangle = \left\langle \frac{2A}{1+A} \left(\frac{2A}{1+A}\right)^{-1} \xi, \eta \right\rangle =
\left \langle \left(\frac{2A}{1+A}\right)^{-1} \xi, \eta \right\rangle_U  ,\]
so that setting $\tilde{\xi} = (\frac{2A}{1+A})^{-1} \xi$ we see that $\tilde{L}_\xi = L_{\tilde{\xi}}$,
where $L_{\tilde{\xi}}$  denotes the free left annihilation operator on $\FockqHalg$ i.e. $L_{\tilde{\xi}}(\xi_1\otimes\cdots\otimes\xi_n)=\langle \tilde{\xi},\xi_1\rangle_{U}\xi_2\otimes\cdots\otimes\xi_n$. By  \cite[Lemma 2.2]{MS} (and linearity) we have $\|L_{\tilde{\xi}}\|_{B(\FockqH)} \leqslant C\|\tilde{\xi}\|_{\Hil_U}$ (where $C>0$ depends only on $q$). But 
\[ \|\tilde{\xi}\|_{\Hil_U}^2 = \langle \tilde{\xi}, \tilde{\xi} \rangle_U = \langle \xi, (\frac{2A}{1+A})^{-1} \xi\rangle \leqslant D^2 \|\xi\|_{\Hil}^2,\]
where $D:= \|(\frac{2A}{1+A})^{-1} \|^{\frac{1}{2}}$. Thus finally for each $j\in \oned$ we have $\|\tilde{L}_{e_j}\|_{B(\FockqH)}  \leqslant CD$, and setting $B:=\max_{i,j \in \oned} |B_{ij}|$, we obtain for each $v \in [d]^*$, $|v|=m-1$,
\[\|T_{i,v} \|_{B(\FockqH)} \leqslant d B (CD)^m.\]
Then  we obtain the following:
\[(*) \leqslant  \sum_{m=1}^\infty  q^{\frac{m(m-1)}{2}} \sum_{|v|=m-1} \|\overline{e}_v\|_{\FockqH} \|T_{i,v}\| 
\|\sum_{w \in [d]^*}  \alpha_{w} e_{w}\|_{\FockqH}. \]
It is easy to check (as in \cite{MS}) that if we set $E = \max_{i,j \in \oned} |\langle \overline{e}_i, \overline{e}_j \rangle_U|$ then we have for each $v\in [d]^*$ of length $k$ the estimate
\[ \|\overline{e}_v\|^2_{\FockqH} \leqslant E^k [k]_{|q|}!. \]
The rest is just gathering the estimates:
\[ (*) \leqslant \left( \sum_{m=1}^\infty  q^{\frac{m(m-1)}{2}} d^{m-1} E^{\frac{m-1}{2}} \sqrt{[m-1]_{|q|}!} d B (CD)^m \right)
\|\sum_{w \in [d]^*}  \alpha_{w} e_{w}\|_{\FockqH}, \]
and noting that the series inside the brackets converges.
\end{proof}

\begin{rem} We can explicitly write the formula for the conjugate variables for the set $(A_1, \ldots, A_d)$ defined in the beginning of this subsection. Indeed in the above proof, we note that
\begin{align*}
    \langle e_0, D_i(\sum_{w\in [d]^*}\alpha_w e_w)&\rangle_{\FockqH}\\&=\sum_{m=1}(-1)^{m-1}q^{\frac{m(m-1)}{2}}\sum_{|v|=m-1}\left\langle \Bar{e}_v, \sum_{j=1}^d B_{ji}\tilde{L}_{vj}\left(\sum_{|w|=2m-1}\alpha_we_w\right)\right\rangle_{\FockqH}.
\end{align*}
Thus since for any word $v$ of length $m-1$ the subspace spanned by $\{\tilde{L}_{vj}(e_{w'})\}_{|w'|=2m'-1}$ is orthogonal to $\bar{e}_v$ for $m'\neq m$, 
  the expression for $D_i^*e_0$ (with $i \in \oned$) can  be given as follows:
\begin{align*}
    D_i^*e_0=\sum_{j=1}^d\sum_{m=1}^{\infty}\sum_{|v|=m-1}(-1)^{m-1}q^{\frac{m(m-1)}{2}} \bar{B}_{ji}\tilde{L}_{vj}^*(\Bar{e}_v), 
\end{align*}
where  the convergence of the sum follows exactly as in the proof of Proposition \ref{prop:conjugate}.
\end{rem}

In terminology of \cite{Brent} Proposition \ref{prop:conjugate} can be rephrased as saying that the set
$\{A_1,\dots, A_d\}$, which generates $\mlg$, has \emph{finite free Fisher information}. Together with Lemma \ref{lem:basechange} this yields the following corollary.

\begin{cor}\label{cor:freeFisher}
Let $\Hil_\br$ be a finite-dimensional real Hilbert  space equipped with an orthogonal group $(U_t)_{t \in \br}$. The algebra $\Gamma_q(\Hil_\br, U_t)$ equipped with the canonical state $\varphi$ is generated by a finite set $G=G^*$ of eigenoperators of the modular group of $\varphi$ with finite free Fisher information.
\end{cor}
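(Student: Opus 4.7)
The plan is to diagonalise the positive operator $A$ on the finite-dimensional complex Hilbert space $\Hil$ and take $G$ to consist of the Wick operators of the eigenvectors together with their adjoints; the substantive analytic work has already been done in Proposition \ref{prop:conjugate}, so the rest of the argument is essentially an assembly step.

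First I would fix an orthonormal basis $\{e_1,\dots,e_d\}$ of $\Hil$ (with respect to the undeformed scalar product) consisting of eigenvectors of $A$, with $Ae_i=\lambda_i e_i$ and $\lambda_i>0$; this is possible because $A$ is positive self-adjoint on $\Hil=\bc^d$. Since the unitary $U_t$ extends the real orthogonal group complex-linearly, it commutes with the canonical conjugation $j\colon\xi\mapsto\bar\xi$, whence $jAj=A^{-1}$ and consequently $A\bar e_i=\lambda_i^{-1}\bar e_i$. Setting $A_i:=W(e_i)$, the Wick formula $W(\xi)^*=W(\bar\xi)$ gives $A_i^*=W(\bar e_i)$, and therefore the set
\[
G:=\{A_1,\dots,A_d,A_1^*,\dots,A_d^*\}
\]
is self-adjoint.

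I would then verify the remaining properties of $G$. For generation: any $\xi\in\Hil_\br\subseteq\Hil$ admits a complex expansion $\xi=\sum_i c_i e_i$, so $s(\xi)=W(\xi)=\sum_i c_i A_i$ lies in the linear span of $\{A_1,\dots,A_d\}$, which in particular shows that $G$ generates $\mlg$ as a von Neumann algebra. For the eigenoperator property: the modular automorphism group of $\varphi$ acts on Wick operators by $\sigma_t^\varphi(W(\xi))=W(A^{it}\xi)$, so $\sigma_t^\varphi(A_i)=\lambda_i^{it}A_i$ and $\sigma_t^\varphi(A_i^*)=\lambda_i^{-it}A_i^*$, confirming that every element of $G$ is an eigenoperator of the modular group.

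For finite free Fisher information, Proposition \ref{prop:conjugate} furnishes conjugate variables $D_1^*\Omega,\dots,D_d^*\Omega$ for the tuple $(A_1,\dots,A_d)$ with respect to the quasi-free difference quotients $\partial_i$. By \cite[Remark 3.13]{Brent}, already quoted at the beginning of this section, this in turn supplies conjugate variables in the usual Voiculescu sense for the associated self-adjoint generating set, which is precisely the requirement that $G$ have finite free Fisher information. The only real obstacle in the corollary itself is this last bridge from the non-self-adjoint tuple with quasi-free conjugate variables to the self-adjoint set $G$ with conjugate variables in Voiculescu's original sense; everything else is a routine identification of eigenvectors and application of the Wick formula, and is dispatched by the cited result of Nelson.
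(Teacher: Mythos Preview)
Your argument is correct and follows the same overall strategy as the paper: produce a self-adjoint generating set of modular eigenoperators and then invoke Proposition \ref{prop:conjugate} together with \cite[Remark 3.13]{Brent}. The paper's proof differs only in packaging: it quotes \cite[Proof of Theorem 2.2]{Hiai} to obtain a linearly independent tuple $(\xi_1,\dots,\xi_d)$ with $\{W(\xi_i)\}$ already a self-adjoint set of eigenoperators, and then uses Lemma \ref{lem:basechange} to transfer the dual system from the fixed orthonormal basis of Proposition \ref{prop:conjugate} to the $\xi_i$. Your route is slightly more self-contained: by observing that $A$ is positive self-adjoint for the \emph{undeformed} inner product you can take the $e_i$ in Proposition \ref{prop:conjugate} to be eigenvectors of $A$ from the outset, so no base-change lemma or external reference to Hiai is needed, at the cost of having $G$ contain the (possibly redundant) adjoints $A_i^{\ast}=W(\bar e_i)$. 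Either way one lands on the same appeal to \cite[Remark 3.13]{Brent}. One cosmetic remark: since $jAj=A^{-1}$ forces the eigenvalues to pair as $\lambda,\lambda^{-1}$, you could choose the eigenbasis so that $\{\bar e_i\}$ is a permutation of $\{e_i\}$, in which case $\{A_1,\dots,A_d\}$ is already self-adjoint and your $G$ coincides with the paper's $d$-element set; this is essentially what the Hiai reference supplies.
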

\begin{proof}
	By \cite[Proof of Theorem 2.2]{Hiai} we can choose a set of linearly independent vectors $(\xi_1, \ldots, \xi_d)$ in $\Hil$ such that $W(\xi_1), \ldots, W(\xi_d)$ form a self-adjoint set of eigenoperators of the modular group of $\varphi$. Lemma \ref{lem:basechange}  and Proposition \ref{prop:conjugate} imply that 
	$\{W(\xi_1), \ldots, W(\xi_d)\}$ has finite free Fisher information (see \cite[Remark 3.13]{Brent}).
\end{proof}

\section{Consequences for structure of $q$-Araki-Woods von Neumann algebras} \label{sec:main}
We begin by quoting the main results of \cite{Brent} and some facts established in \cite{SW} (see also \cite{BM}).
\begin{tw}[\cite{Brent}, Theorem A]\label{Thm:Nelson}
Let $\mlg$ be a von Neumann algebra with a faithful normal state $\varphi$. Suppose $\mlg$ is generated by a finite set $G=G^{\ast}$, $|G|\geqslant 2$ of eigenoperators of the modular group $\sigma^{\varphi}$ with finite free Fisher information. Then $(\mlg^{\varphi})^{\prime} \cap \mlg = \mathbb{C}$. In particular, $\mlg^{\varphi}$ is a $\mathrm{II}_1$ factor and if $H < \mathbb{R}^{\times}_{\ast}$ is the closed subgroup generated by the eigenvalues of $G$ then $\mlg$ is a factor of type
\[
\left\{\begin{array}{ll} \mathrm{III}_1 & \text{ if } H=\mathbb{R}^{\times}_{\ast} \\ \mathrm{III}_{\lambda} & \text{ if } H= \lambda^{\mathbb{Z}}, 0<\lambda<1 \\ \mathrm{II}_1 & \text{ if } H=\{1\}.  \end{array}   \right.
\]
\end{tw}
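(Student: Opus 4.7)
The plan is to establish triviality of the relative commutant $(\mlg^\varphi)' \cap \mlg$, from which the factor-type classification will follow by Connes' modular theory. First, observe that since each $g \in G$ is an eigenoperator of $\sigma^\varphi$ with some eigenvalue $\lambda_g > 0$, every monomial $g_{i_1} \cdots g_{i_n}$ with $\prod_k \lambda_{g_{i_k}} = 1$ (a ``balanced'' monomial) lies in the centralizer $\mlg^\varphi$; the assumption $|G|\geq 2$ and $G = G^\ast$ guarantees a large supply of these.

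The main step would be to exploit the conjugate variables $\xi_i \in \Ltwo(\mlg, \varphi)$ for the quasi-free difference quotients $\partial_i$ to set up an integration-by-parts formula on $\bc\la G\ra$, and to transcribe to the non-tracial setting a Dabrowski-type $\Ltwo$-commutator inequality that in the tracial case controls $\|[x, g_i]\|_2$ by quantities involving $\partial_i$ and the $\xi_i$. Given $x \in (\mlg^\varphi)' \cap \mlg$, which one may reduce to a bounded self-adjoint element by spectral calculus, the commutation of $x$ with every balanced monomial, together with such an $\Ltwo$-estimate, should yield an overdetermined system forcing $x \in \bc$.

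Once the relative commutant is trivial, $\mlg^\varphi$ is automatically a factor; the finite Fisher information hypothesis rules out atoms in the joint distribution of the generators, so $\mlg^\varphi$ is diffuse, hence $\mathrm{II}_1$. The type of $\mlg$ then reads off the point modular spectrum: if $H=\{1\}$ the modular group acts trivially on $G$, so $\varphi$ is already a trace and $\mlg = \mlg^\varphi$ is $\mathrm{II}_1$; otherwise Connes' theorem combined with triviality of the relative commutant identifies $S(\mlg)\setminus\{0\}$ with $H$, giving type $\mathrm{III}_1$ for $H=\br^\times_\ast$ and type $\mathrm{III}_\lambda$ for $H=\lambda^\bz$.

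The principal obstacle is the first step: in the non-tracial setting $\Ltwo(\mlg, \varphi)$ is not a symmetric bimodule, so Dabrowski's tracial arguments, which rely on trace cyclicity to symmetrize expressions, do not transcribe directly. One must insert factors of $\Delta_\varphi^{\pm 1/2}$ consistently throughout the calculation, and here the eigenoperator hypothesis on $G$ becomes crucial: it diagonalizes the action of $\sigma^\varphi$ on monomials, so the class of balanced test elements in $\mlg^\varphi$ has exactly the algebraic structure needed to close the estimate and bootstrap it into the required triviality.
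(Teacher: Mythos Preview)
This theorem is not proved in the paper; it is quoted verbatim as Theorem~A of \cite{Brent} and used as a black box. There is therefore no ``paper's own proof'' to compare against---the authors invoke Nelson's result and move on.

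Your sketch is a fair high-level outline of the strategy in \cite{Brent}: one does establish an $L^2$-commutator inequality in the non-tracial setting (a Dabrowski-type estimate with modular corrections), and the eigenoperator hypothesis is precisely what makes the modular factors tractable. The deduction of the type classification from $(\mlg^\varphi)'\cap\mlg=\mathbb{C}$ via Connes' theory is also standard and correctly stated. That said, what you have written is explicitly a plan rather than a proof: the ``principal obstacle'' you name---carrying Dabrowski's argument through the asymmetric bimodule $L^2(\mlg,\varphi)$---is the entire technical content of Nelson's paper, and you have described the difficulty without resolving it. If the intent is to reprove Theorem~\ref{Thm:Nelson} rather than cite it, you would need to actually execute the $L^2$-estimate (this occupies several sections of \cite{Brent}); if the intent is merely to use it, then a citation suffices and no sketch is required.
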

\begin{tw}[\cite{Brent}, Theorem B]\label{Thm:Nelson2}
Let $\mlg$ be a von Neumann algebra with a faithful normal state $\varphi$. Suppose $\mlg$ is generated by a finite set $G=G^{\ast}$, $|G|\geqslant 2$ of eigenoperators of the modular group $\sigma^{\varphi}$ with finite free Fisher information. Then $\mlg^{\varphi}$ does not have property $\Gamma$. Furthermore, if $\mlg$ is a type $\mathrm{III}_{\lambda}$ factor, $0<\lambda<1$, then $\mlg$ is full.
\end{tw}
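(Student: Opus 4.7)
The plan is to adapt Voiculescu's classical technique, in which finite free Fisher information precludes central sequences in the tracial setting, to the non-tracial case by working throughout with the quasi-free difference quotients $\partial_i$ and their conjugate variables $\xi_i := \partial_i^*(\mathds{1}\otimes\mathds{1})$. The guiding principle is that the conjugate variables furnish a ``dual basis'' which forces any centralising sequence to be asymptotically scalar.

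For the assertion that the centraliser $\mlg^\varphi$ (a type $\mathrm{II}_1$ factor by Theorem \ref{Thm:Nelson}) has no property $\Gamma$, I would argue by contradiction: assume a non-trivial central sequence $(y_k)\subset \mlg^\varphi$, normalised so that $\tau(y_k)=0$ and $\|y_k\|_2=1$, where $\tau=\varphi|_{\mlg^\varphi}$ is the canonical trace. Starting from the defining relation
\[
\langle \xi_i, y\mathds{1}\rangle_{L^2(\mlg,\varphi)} = \langle \mathds{1}\otimes\mathds{1}, \partial_i(y)(\mathds{1}\otimes\mathds{1})\rangle
\]
and combining it with the Leibniz rule for $\partial_i$, one expresses the commutators $[y_k, g]$ (for $g\in G$) in terms of $\xi_i$ paired against bounded left/right multiples of $y_k$; the eigenoperator hypothesis $\sigma^\varphi_t(g) = \lambda_g^{it}g$ is precisely what makes these rearrangements respect the centraliser. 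A free-probabilistic Stam--Cram\'er--Rao-type inequality, playing $\|\xi_i\|_2$ against $\|[y_k,g]\|_2\to 0$, then yields an estimate of the form $\|y_k\|_2^2 \leq o(1)$, contradicting $\|y_k\|_2=1$.

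For the fullness statement in the $\mathrm{III}_\lambda$ case, $0<\lambda<1$, I would invoke the discrete decomposition $\mlg \cong \mlg^\varphi \rtimes_\theta \bz$ afforded by the almost-periodic state $\varphi$: it is a classical result going back to Connes that such a factor is full if and only if its centraliser is a $\mathrm{II}_1$ factor without property $\Gamma$, essentially because a non-trivial central sequence in $\mlg$ can be averaged (via compactness of the modular spectrum modulo $\lambda^{\bz}$) into the centraliser. The first part of the argument then closes the case.

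The main obstacle I foresee is making the Stam-type estimate genuinely work in the non-tracial setting. The tracial Voiculescu argument exploits cyclicity of the trace freely; here the KMS twist forces every manipulation to track the modular data, and keeping $\partial_i$ acting inside a domain on which one has honest $L^2$-control of $\xi_i$ is the delicate point. The eigenoperator structure of $G$, i.e.\ the fact that the modular group acts diagonally on the chosen generators, is ultimately what makes the argument go through, both in the rearrangement of commutators and in the reduction to the centraliser.
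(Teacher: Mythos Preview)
The paper does not prove this statement at all: Theorem~\ref{Thm:Nelson2} is simply quoted verbatim from \cite{Brent} (Nelson, Theorem~B) and used as a black box in the later arguments. There is therefore no ``paper's own proof'' to compare your sketch against.

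That said, your outline is broadly in the spirit of how Nelson actually proceeds in \cite{Brent}, so a brief assessment may still be useful. The first part---ruling out property~$\Gamma$ for $\mlg^\varphi$ via a Voiculescu/Dabrowski-style argument with conjugate variables---is indeed the strategy, and you correctly identify the crux: the tracial argument leans on cyclicity, and in the non-tracial setting every commutator manipulation picks up modular twists that must be controlled using the eigenoperator hypothesis. What you have written is an honest description of the shape of the argument but not of its substance; the actual work in \cite{Brent} lies in setting up the right bimodule framework for $\partial_i$ so that the adjoint $\partial_i^*$ behaves well and the relevant $L^2$-estimates survive the KMS twist. Your sketch does not attempt this, and the phrase ``a free-probabilistic Stam--Cram\'er--Rao-type inequality'' hides essentially all of the difficulty.

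For the fullness claim in the $\mathrm{III}_\lambda$ case your reduction is correct in spirit: with $0<\lambda<1$ one has a discrete core decomposition, and it is indeed a result of Connes that fullness of $\mlg$ is tied to the absence of property~$\Gamma$ in $\mlg^\varphi$ (together with an outerness condition on the implementing automorphism, which here is automatic since $(\mlg^\varphi)'\cap\mlg=\bc$ by Theorem~\ref{Thm:Nelson}). So this half of your argument is sound, modulo making the Connes reference precise.
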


\begin{tw}[\cite{SW}, Lemma 5(2) with its proof, and Theorem 7(1)] \label{Thm:SW}
Let $(\Hil_{\mathbb{R}}, U_t) = (\mathsf{K}_{\mathbb{R}}, U_t^{\prime}) \oplus (\mathsf{L}_{\mathbb{R}}, U_{t}^{\prime\prime})$ be the decomposition into, respectively, the almost periodic and the weakly mixing part. Denote $\mlg:= \Gamma_q(\Hil_{\mathbb{R}}, U_t)$ and write $\mlg_1$ and $\mlg_2$ for the expected subalgebras corresponding to, respectively, the almost periodic and the weakly mixing parts. Then
\begin{enumerate}[{\normalfont (i) }]
\item $\mlg^{\varphi} \subset \mlg_1$, hence if $x\in \mlg\cap \mlg^{\prime}$ then $x \in \mlg_1$; 
\item if $(U_t)_{t \in \br}$ admits a non-zero fixed vector then $\mlg$ is a factor.
\end{enumerate}
\end{tw}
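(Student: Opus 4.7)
The plan is to prove (i) first and then deduce (ii) from (i) combined with the explicit presence of the fixed vector. For (i), the decisive tool is the identification of $\mlg^\varphi$ with the fixed vectors of the second quantization on the Fock space, followed by a spectral-measure argument using weak mixing. For (ii), part (i) localises the centre inside $\mlg_1$, and the fixed vector then produces a diffuse element in the centralizer which, together with the almost periodic structure of $\mlg_1$, should force the centre to collapse.

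To carry out (i), I would first establish the formula $\sigma_t^\varphi(W(\eta)) = W(F_q(U_t)\eta)$, where $F_q(U_t)$ denotes the second quantization of $U_t$ on $\FockqH$ acting tensorially on simple tensors. This comes from the KMS condition together with the standard description of $\sigma_t^\varphi$ on the Gaussian generators $s(\xi)$. Using the identification $W(\cdot)\Omega = \cdot$, which realises $\mlg\Omega$ densely in $\FockqH$, elements of $\mlg^\varphi$ correspond bijectively to fixed vectors of $F_q(U_t)$. The next step is to show that every such fixed vector lies in the sub-Fock-space built over the complexified almost periodic part. By definition of the weakly mixing part, $U_t$ restricted to its complexification has purely continuous spectral measure, and convolution of an atomless spectral measure with any other remains atomless, so every tensor summand of $\Hil^{\otimes n}$ containing at least one weakly mixing factor supports no nonzero fixed vector of $U_t^{\otimes n}$. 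Summand-by-summand projection confines the fixed vectors to the almost periodic Fock subspace, and since Wick operators built from vectors in this subspace generate exactly $\mlg_1$, we conclude $\mlg^\varphi \subset \mlg_1$. The statement about the centre follows because $Z(\mlg)$ is always modular-invariant, hence contained in $\mlg^\varphi$.

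For (ii), the fixed vector $\xi$ gives $s(\xi) \in \mlg^\varphi$ whose spectral distribution is the (diffuse) tracial $q$-Gaussian law. Any $z \in Z(\mlg) \subset \mlg_1$ commutes with $s(\xi)$ and, by diffuseness, is forced to be a measurable function of $s(\xi)$ inside its relative commutant. One then exploits that $z$ must also commute with Wick operators $W(\eta)$ where $\eta$ is an eigenvector of $A$ with eigenvalue $\lambda\neq 1$: the Wick product formula generates mixing terms involving $\xi$ through $B$-type coefficients, which combined with spectral analysis of $s(\xi)$ should force $z$ to be scalar. The principal obstacle I foresee is precisely this last step, namely rigorously ruling out a non-constant functional dependence of $z$ on $s(\xi)$ in the presence of noncommuting Wick perturbations. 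A cleaner alternative would be to exploit the $U_t$-invariant orthogonal decomposition $\Hil_\br = \br\xi \oplus \xi^\perp$ and transfer central information through a conditional expectation onto $\Gamma_q(\br\xi)$, but the absence of a genuine free-product splitting in the $q$-deformed setting forces one to track the $q$-dependence carefully rather than invoke a clean freeness result.
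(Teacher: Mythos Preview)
The paper does not supply its own proof of this statement: it is quoted verbatim from \cite{SW} (Lemma~5(2) and Theorem~7(1)), and the subsequent arguments simply invoke it. So the comparison is with the argument in \cite{SW}.

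Your treatment of (i) is correct and is essentially the argument of \cite[Lemma~5(2)]{SW}: identify $\mlg^\varphi$ with Wick operators of $F_q(U_t)$-fixed vectors via $\sigma_t^\varphi(W(\eta))=W(F_q(U_t)\eta)$, and then use that a tensor containing a weakly mixing factor cannot be $U_t^{\otimes n}$-fixed because convolution with an atomless spectral measure stays atomless. Nothing to add here.

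For (ii) there is a genuine gap, and you have correctly located it. Your first route --- writing $z$ as ``a measurable function of $s(\xi)$ inside its relative commutant'' --- presupposes exactly what has to be proved, namely that $\{s(\xi)\}'\cap\mlg$ (or at least its intersection with the centre) is already contained in $W^*(s(\xi))$. Diffuseness of the law of $s(\xi)$ by itself gives no such conclusion. Your second route, via the decomposition $\Hil_\br=\br\xi\oplus\xi^\perp$ and a conditional expectation, is the right instinct, but you abandon it on the grounds that there is no free-product splitting; in \cite{SW} this obstacle is bypassed by working directly on the Fock space rather than appealing to freeness. The core of the argument in \cite{SW} is a Fock-space computation showing that for a $U_t$-fixed unit vector $\xi$ the abelian subalgebra $W^*(s(\xi))$ has trivial relative commutant intersected with the centre: one first shows that any $z\in\mlg$ commuting with $s(\xi)$ has $z\Omega$ supported in tensors built solely from $\xi$ (this uses that $\xi$ is fixed, so the deformed and undeformed inner products agree on $\bc\xi$, and an inductive Wick/creation--annihilation computation on $\FockqH$), hence $z\in W^*(s(\xi))$; then commutation of $z=f(s(\xi))$ with a single further generator $s(\eta)$, $\eta\perp\xi$, forces $f$ to be constant. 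The point is that the $q$-deformation is handled by explicit tensor-level estimates on $\FockqH$, not by any algebraic independence statement. Your sketch does not contain this step, and without it the argument cannot close.
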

With Corollary \ref{cor:freeFisher} and these tools in hand we can completely characterize factoriality of $q$-Araki-Woods algebras and establish all the other results listed in the introduction. 
\begin{tw} \label{thm:factor}
Let $(\Hil_{\mathbb{R}}, U_t)$ be given, with $\dim(\Hil_{\br})\geqslant 2$. Then $\mlg:=\Gamma_q(\Hil_{\mathbb{R}}, U_t)$ is a factor. Moreover, if $G < \mathbb{R}^{\times}_{\ast}$ is the closed subgroup generated by the spectrum of $A$ then $\mlg$ is a factor of type
\[
\left\{\begin{array}{ll} \mathrm{III}_1 & \text{ if } G=\mathbb{R}^{\times}_{\ast} \\ \mathrm{III}_{\lambda} & \text{ if } G= \lambda^{\mathbb{Z}}, 0<\lambda<1 \\ \mathrm{II}_1 & \text{ if } G=\{1\}.  \end{array}   \right.
\]
\end{tw}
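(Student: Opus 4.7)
The plan is to split by dimension: handle finite dimensions with the machinery developed in Section \ref{sec:dc}, and reduce the general case to that one via Theorem \ref{Thm:SW}(i). For $d := \dim(\Hil_\br) < \infty$, Corollary \ref{cor:freeFisher} supplies a self-adjoint generating family of $d \geq 2$ eigenoperators of $\sigma^\varphi$ with finite free Fisher information, so Theorem \ref{Thm:Nelson} yields both factoriality and the type classification in one stroke. One only has to check that the subgroup $H < \br^\times_\ast$ appearing there, generated by the modular eigenvalues of the chosen generators, coincides with $G$: since those generators are $W(\xi_i)$ with $\xi_i$ eigenvectors of $A$, and in finite dimensions $\sigma(A)$ is exactly the corresponding set of eigenvalues, this is immediate.

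For general $\Hil_\br$, let $z \in Z(\mlg)$. Theorem \ref{Thm:SW}(i) places $z$ inside the almost periodic subalgebra $\mlg_1 := \Gamma_q(\mathsf{K}_\br, U_t|_{\mathsf{K}_\br})$, so it suffices to analyse $\mathsf{K}_\br$. If $\mathsf{K}_\br = \{0\}$ then $\mlg_1 = \bc\mathds{1}$; if $\dim \mathsf{K}_\br = 1$ then continuity forces $U_t$ to act as the identity on $\mathsf{K}_\br$ (any continuous one-parameter subgroup of $O(1)=\{\pm 1\}$ is trivial), giving a non-zero fixed vector and hence factoriality via Theorem \ref{Thm:SW}(ii); if $2 \leq \dim \mathsf{K}_\br < \infty$ then $\mlg_1$ is itself a factor by the previous paragraph, so $z \in Z(\mlg_1) = \bc\mathds{1}$. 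In the remaining case $\dim \mathsf{K}_\br = \infty$, write $\mathsf{K}_\br = \overline{\bigcup_n \mathsf{K}_n}$ as an increasing union of finite-dimensional $U_t$-invariant subspaces of dimension at least $2$ (available by the defining structure of the almost periodic part), and set $\mlg_n := \Gamma_q(\mathsf{K}_n, U_t|_{\mathsf{K}_n})$, a factor by the finite-dimensional case. The $U_t$-invariance of $\mathsf{K}_n$ provides a $\varphi$-preserving conditional expectation $E_n : \mlg \to \mlg_n$; since $z$ commutes with $\mlg_n$ and $E_n$ is $\mlg_n$-bimodular, $E_n(z) \in Z(\mlg_n) = \bc\mathds{1}$, so $E_n(z) = \varphi(z)\mathds{1}$. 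Strong convergence $E_n(z) \to z$ on the GNS space (the $L^2$-projections onto $\mlg_n \Omega$ increase to the one onto $\mlg_1 \Omega$) then forces $z = \varphi(z)\mathds{1}$, and $\mlg$ is a factor.

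The type is obtained from Theorem \ref{Thm:Nelson} directly in finite dimensions. In general, the tracial case $U_t = \mathrm{id}$ gives $G = \{1\}$ and type $\mathrm{II}_1$, while a non-trivial weakly mixing part makes the spectral measure of $A$ continuous and non-zero, so $\sigma(A)$ is uncountable; as the only closed subgroups of $\br^\times_\ast$ (viewed inside $(0,\infty)$) are $\{1\}$, $\lambda^{\bz}$ and the whole group, this forces $G = \br^\times_\ast$ and type $\mathrm{III}_1$ by \cite[Theorem 6.1]{BM}, noted after the main theorem. The remaining almost periodic infinite-dimensional case is handled by the same filtration $\{\mlg_n\}$: the modular group of $\mlg$ restricts to that of each $\mlg_n$, whose type is determined by the closed subgroups $G_n$ generated by $\sigma(A|_{\mathsf{K}_n})$, and since $G_n \nearrow G$ the Connes spectrum of the factor $\mlg$ is identified with $G$. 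The main obstacle is precisely this last identification of the modular spectrum in the almost periodic infinite-dimensional case; the strong convergence $E_n(z) \to z$ and the compatibility of the $E_n$ with the modular group used above are routine consequences of the $U_t$-invariance of $\mathsf{K}_n$, since this invariance forces the modular automorphism group of $\varphi$ to leave each $\mlg_n$ globally invariant.
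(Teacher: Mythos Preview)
Your factoriality argument is correct and essentially coincides with the paper's: both reduce to the almost periodic part via Theorem~\ref{Thm:SW}(i), treat the finite-dimensional case through Corollary~\ref{cor:freeFisher} and Theorem~\ref{Thm:Nelson}, and handle the one-dimensional almost periodic part via the fixed-vector observation. For the infinite-dimensional almost periodic case the paper primarily cites \cite[Theorem 3.2]{Hiai} and \cite[Theorem 4.3]{BMRW}, but explicitly notes that the inductive limit route you take also works; your conditional-expectation argument is a clean realisation of that.

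The type classification, however, has a genuine gap in the infinite-dimensional almost periodic case. The assertion ``since $G_n \nearrow G$ the Connes spectrum of the factor $\mlg$ is identified with $G$'' is not justified: the $S$-invariant does not in general pass to inductive limits of subfactors, and knowing the type of each $\mlg_n$ says nothing a priori about $S(\mlg)$ (you yourself flag this as ``the main obstacle'' but then do not resolve it). The paper avoids this by establishing the stronger statement $(\mlg^{\varphi})'\cap\mlg=\bc$: in finite dimensions this is part of Theorem~\ref{Thm:Nelson}, and in infinite dimensions the paper quotes \cite[Theorem 3.2]{Hiai} and \cite[Theorem 5.1]{BMRW}. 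Once the centralizer is irreducible and $\varphi$ is almost periodic, the type is read off directly from the point spectrum of $\Delta_\varphi$, i.e.\ from $G$ (see \cite[Section 1]{Hiai}).

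You can repair your argument without leaving the inductive-limit framework: run exactly the same computation with $x\in(\mlg^{\varphi})'\cap\mlg$ in place of $z\in Z(\mlg)$. Since $\mlg_n^{\varphi}\subset\mlg^{\varphi}$ (the modular group restricts), $x$ commutes with $\mlg_n^{\varphi}$, so $E_n(x)\in(\mlg_n^{\varphi})'\cap\mlg_n=\bc\mathds{1}$ by Theorem~\ref{Thm:Nelson}, and $E_n(x)\to x$ as before yields $x\in\bc\mathds{1}$. This gives irreducibility of the centralizer and hence the type, replacing the unproved Connes-spectrum limit claim.
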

\begin{proof}
By Theorem \ref{Thm:SW} the center of $\mlg$ is contained in the almost periodic part, so we may assume it is nontrivial. If it is one dimensional, then it necessarily contains a non-zero $U_t$-invariant vector, so this case is covered by Theorem \ref{Thm:SW} as well. We can therefore assume that we are in the almost periodic case with $\dim(\Hil_{\br})\geqslant 2$. If  $\Hil_{\br}$ is infinite dimensional then factoriality has been obtained by Hiai (\cite[Theorem 3.2]{Hiai}, which in fact omits certain cases; see \cite[Theorem 4.3]{BMRW} for a complete result). In the finite dimensional case we can use   Corollary \ref{cor:freeFisher} and Theorem \ref{Thm:Nelson}. Note that the infinite-dimensional almost periodic case can be also deduced from the finite-dimensional one via the inductive limit argument.

If $(U_t)_{t \in \br}$ is almost periodic then the centralizer $\mlg^{\varphi}$ is irreducible in $\mlg$ (as follows from   Corollary \ref{cor:freeFisher} and Theorem \ref{Thm:Nelson} in finite dimensions and \cite[Theorem 3.2]{Hiai}, \cite[Theorem 5.1]{BMRW} in the infinite dimensional case). Therefore in this case the type classification can be simply obtained from the spectral data of $A$ as in the statement (see \cite[Section 1]{Hiai}). On the other hand, if there is a non-trivial weakly mixing part,  \cite[Theorem 8.1]{BM} implies that $\mlg$ is a $\mathrm{III}_1$ factor (see also \cite[Theorem 3.4]{Hiai} for an earlier result in the purely weakly mixing case).
\end{proof}

\begin{tw} \label{thm:noninj}
The factor $\Gamma_q(\Hil_{\br}, U_t)$ is not injective as soon as $\dim(\Hil_{\br}) \geqslant 2$.
\end{tw}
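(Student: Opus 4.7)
The plan is to derive non-injectivity from Nelson's Theorem \ref{Thm:Nelson2} using the finite free Fisher information delivered by Corollary \ref{cor:freeFisher}, and then to reduce the infinite-dimensional case to the finite-dimensional one via conditional expectations.

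First I would handle the finite-dimensional case: for $d := \dim(\Hil_\br) < \infty$ with $d \geqslant 2$, Corollary \ref{cor:freeFisher} supplies a self-adjoint set of $d$ eigenoperators of $\sigma^\varphi$ with finite free Fisher information generating $\mlg$. By Theorem \ref{Thm:Nelson2} the centralizer $\mlg^\varphi$ does not have property $\Gamma$. Since $\mlg^\varphi$ is a $\mathrm{II}_1$ factor (Theorem \ref{Thm:Nelson}) and the hyperfinite $\mathrm{II}_1$ factor does have property $\Gamma$, this forces $\mlg^\varphi$ to be non-injective. To transfer the conclusion to $\mlg$, I would invoke the normal $\varphi$-preserving conditional expectation $E : \mlg \to \mlg^\varphi$ guaranteed by Takesaki's theorem (applicable because $\mlg^\varphi$ is tautologically $\sigma^\varphi$-invariant); injectivity of $\mlg$ would then pass through $E$ (compose a completely positive projection onto $\mlg$ with $E$ to get one onto $\mlg^\varphi$) and contradict the preceding sentence.

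For $\dim(\Hil_\br) = \infty$, I would try to locate a finite-dimensional $U_t$-invariant real subspace $K_\br \subset \Hil_\br$ of dimension at least $2$ and exploit the canonical $\varphi$-preserving conditional expectation $\mlg \to \Gamma_q(K_\br, U_t|_{K_\br})$ coming from the functoriality of the construction; the finite-dimensional case together with the same conditional-expectation argument then transfers non-injectivity upwards. This strategy covers the almost periodic setting and all cases in which $(U_t)$ has enough fixed vectors. The main obstacle is the purely weakly mixing infinite-dimensional case, where no finite-dimensional invariant real subspace exists. There I would either (i) extend the dual-variable construction of Section \ref{sec:dc} to a countable generating set of $\mlg$ and invoke a countable-generator version of Theorem \ref{Thm:Nelson2}, or (ii) use a continuous-core decomposition of $\mlg$ to reduce to a tracial $q$-Gaussian factor on an infinite-dimensional real Hilbert space, whose non-injectivity is classical.
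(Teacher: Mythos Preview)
Your finite-dimensional argument is correct and in fact slightly cleaner than the paper's: instead of splitting into the cases ``$U_t$ trivial on a 2-plane'' (where the paper cites \cite{Nou}) versus ``$U_t$ ergodic on a 2-plane'' (where the paper argues via fullness), you go straight through the centralizer $\mlg^\varphi$, using Theorem \ref{Thm:Nelson2} to get absence of property $\Gamma$ and then the $\varphi$-preserving expectation onto $\mlg^\varphi$ to lift non-injectivity to $\mlg$. That is a genuine simplification in this range.

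The gap is in the infinite-dimensional weakly mixing case. Neither of your proposed escapes works as stated. For (i), there is no ready-made countable-generator analogue of Theorem \ref{Thm:Nelson2}: Nelson's arguments in \cite{Brent} use finiteness of the generating set in an essential way (e.g.\ through the covariance matrix and the free Fisher information being a finite quantity), and nothing in Section \ref{sec:dc} extends the norm estimates for the dual variables uniformly as $d\to\infty$. For (ii), the continuous core $\mlg\rtimes_{\sigma^\varphi}\br$ of a $q$-Araki--Woods algebra is \emph{not} known to be a tracial $q$-Gaussian algebra; there is no structural identification of this crossed product available, so the reduction you sketch does not go through.

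The paper sidesteps the issue entirely by quoting an existing result: if the weakly mixing part is non-trivial, the expected subalgebra $\mlg_2$ corresponding to it (cf.\ Theorem \ref{Thm:SW}) is a purely weakly mixing $q$-Araki--Woods algebra, and \cite[Theorem 3.4]{Hiai} already shows such algebras are non-injective. So the correct fix for your argument is simply to invoke Hiai's theorem in the weakly mixing situation rather than attempt (i) or (ii).
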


\begin{proof}
To prove non-injectivity, we will find an expected non-injective subalgebra; note that the case where the weakly mixing part is non-trivial has already been covered in \cite[Theorem 3.4]{Hiai}, where it was proved that in the purely weakly mixing case $\Gamma_q(\Hil_{\br}, U_t)$ is a non-injective factor.

We therefore assume that we are in the almost periodic case. It means that either we will find a two dimensional subspace on which $(U_t)_{t \in \br}$ is trivial, or a two dimensional subspace on which $(U_t)_{t \in \br}$ is ergodic. In both cases the corresponding $q$-Araki-Woods algebra will be non-injective and with expectation, from which we will be able to conclude. In the former we are just dealing with a $q$-Gaussian algebra, which was covered in \cite[Theorem 2]{Nou}. In the latter we can conclude from  Corollary \ref{cor:freeFisher} and Theorem \ref{Thm:Nelson2} that we have a type $\mathrm{III}_{\lambda}$ full subfactor, and fullness implies non-injectivity.
\end{proof}

We saw above that the $q$-Araki Woods factor is full when it is of type III$_\lambda$, $0<\lambda<1$ and  dimension of $\Hil_\br$ is finite. We now establish fullness in the remaining type III$_1$ finite-dimensional case as well. We also establish solidity of such factors (see \cite{Oza} for the original definition for finite von Neumann algebras and \cite{HR} for the modification needed in the general case).
We will first introduce a definition which will be also useful in the last part of the paper.

\begin{deft}
	Suppose that $\mlg$ is a von Neumann algebra (represented in a standard form on the Hilbert space $L^2(\mlg)$). We say that it satisfies the Akemann-Ostrand (AO) property if there exist  two weak$^*$-dense $C^*$-subalgebras $\alg \subset \mlg$ and $\blg \subset \mlg'$, with $\alg$ locally reflexive, such that the product-quotient map $\theta: \alg \odot \blg \to B(L^2(\mlg))/K(L^2(\mlg))$ is continuous with respect to the minimal tensor product norm. We say that a II$_1$-factor $\mlg$   satisfies the  $W^*$(AO) property if the product-quotient map $\tilde{\theta}: \mlg \odot \mlg' \to B(L^2(\mlg))/\mathbb{K}_\mlg$ is continuous with respect to the minimal tensor product norm, where $\mathbb{K}_\mlg$ is the $C^*$-subalgebra of $B(L^2(\mlg))$ defined in \cite{Taka}.
\end{deft}

Note that one could also define  the  $W^*$(AO) property for an arbitrary finite von Neumann algebra $\mlg$, but then one also needs to fix a faithful normal tracial state on $\mlg$ (as the definition of $\mathbb{K}_\mlg$ in \cite{Taka} formally refers to the chosen trace).



\begin{tw} \label{thm:full}
Let $(\Hil_\br, U_t)$ be given with $2\leqslant\dim \Hil_\br<\infty$. Then $\mlg:=\Gamma_q(\Hil_\br, U_t)$ is solid and full. 
\end{tw}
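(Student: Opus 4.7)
The plan is to combine the finite free Fisher information result from Corollary \ref{cor:freeFisher}, Theorem \ref{Thm:Nelson2} of Nelson, and the Akemann-Ostrand property, appealing to existing structural machinery for solidity and fullness.

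Fullness in the type $\mathrm{II}_1$ and type $\mathrm{III}_\lambda$ ($0<\lambda<1$) cases is immediate from Theorem \ref{Thm:Nelson2}: in type $\mathrm{II}_1$ we have $\mlg = \mlg^\varphi$, whose absence of property $\Gamma$ is equivalent to fullness, while the $\mathrm{III}_\lambda$ case is stated there directly. The main work is the type $\mathrm{III}_1$ case. Here I would invoke Marrakchi's criterion characterising fullness of type $\mathrm{III}_1$ factors in terms of a spectral gap of the modular flow on the centralizer, combined with the non-$\Gamma$ property of $\mlg^\varphi$ from Theorem \ref{Thm:Nelson2}. Alternatively, one can pass to the continuous core $\tilde{\mlg} = \mlg \rtimes_{\sigma^\varphi} \mathbb{R}$, a $\mathrm{II}_\infty$ factor whose fullness reduces to that of a compression containing a copy of $\mlg^\varphi$, and then lift fullness back to $\mlg$ by the standard dictionary between $\mlg$ and its core.

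For solidity I would first verify the (AO) property for $\mlg$. The natural choice is to take $\alg$ as the unital $C^*$-algebra generated by the Wick operators $\{W(\xi): \xi \in \Hil_U\}$ and $\blg$ as the analogous right-acting $C^*$-algebra on $\FockqH$. The key input is compactness of commutators of left creation operators with right creation/annihilation operators, which is known for the tracial $q$-Gaussian case by work of Shlyakhtenko (and exploited in the $q$-Araki-Woods setting in \cite{SW, BMRW}); the argument carries over with minor modifications to accommodate the deformed inner product and the action of $(U_t)_{t \in \br}$. Local reflexivity of $\alg$ follows from its identification with a quotient of an exact Toeplitz-type algebra. Given (AO), solidity then follows from the general theorem of Isono/Houdayer-Isono asserting that a $\sigma$-finite non-amenable factor with (AO) is solid; the required non-amenability is provided by Theorem \ref{thm:noninj}.

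The main obstacle, in my view, is the $\mathrm{III}_1$ case of fullness: Theorem \ref{Thm:Nelson2} does not address it directly, so one must bridge from the non-$\Gamma$ property of the centralizer to fullness of the ambient factor, which requires either a spectral gap/Marrakchi-type argument or a careful descent to the continuous core. The (AO) verification is essentially routine given the tracial blueprint, but the definition of the right-acting algebra $\blg$ requires some care because in the non-tracial setting the commutant action on $\FockqH$ is twisted by the modular operator.
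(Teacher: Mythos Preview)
Your overall strategy---Nelson's Theorems plus the (AO) property---matches the paper's, and your treatment of the $\mathrm{II}_1$ and $\mathrm{III}_\lambda$ ($0<\lambda<1$) cases is correct. The solidity argument is also essentially the same: the paper establishes (AO) and then invokes \cite[Theorem A]{HR} to get $\omega$-solidity, hence solidity. One technical point: the paper's (AO) verification is cleaner than yours, since it uses that the Cuntz--Toeplitz algebra $T_q(\Hil)$ is \emph{nuclear} (by \cite{Kuz}, as an extension of the Cuntz algebra by compacts) and then argues as in \cite[Section 4]{Dima2}; this bypasses the need to track commutator compactness and local reflexivity separately.

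The genuine divergence, and the place where your proposal has a gap, is the $\mathrm{III}_1$ case of fullness. You propose to deduce it from the non-$\Gamma$ property of $\mlg^{\varphi}$ via ``Marrakchi's criterion'' or a continuous-core argument, treating fullness as independent of solidity. But neither route is clearly sufficient as stated: non-$\Gamma$ of the centralizer, even combined with $(\mlg^{\varphi})'\cap\mlg=\mathbb{C}$, is not known to imply fullness of a $\mathrm{III}_1$ factor in general, and the passage through the continuous core is delicate (fullness of $\mlg$ and fullness of the core are not equivalent without further input). The paper instead \emph{uses solidity to get fullness}: having established $\omega$-solidity from (AO), and knowing from Theorems \ref{Thm:Nelson} and \ref{Thm:Nelson2} with Corollary \ref{cor:freeFisher} that $\mlg^{\varphi}$ is a \emph{non-injective} $\mathrm{II}_1$ factor, it invokes \cite[Proposition 3.10]{HR}, which gives fullness of an $\omega$-solid $\mathrm{III}_1$ factor whose centralizer is non-injective. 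So the two halves of the theorem are not parallel but interlocked: (AO) $\Rightarrow$ $\omega$-solid $\Rightarrow$ full (in type $\mathrm{III}_1$, via non-injectivity of the centralizer). Your proposal misses this link.
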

\begin{proof}
If $(U_t)_{t \in \br}$ is trivial, then $\mlg$ is a $q$-Gaussian algebra and its fullness is proved in \cite{MS}. If $\mlg$ is of type III$_\lambda$, $0<\lambda<1$, the statement about fullness follows from  Corollary \ref{cor:freeFisher} and Theorem \ref{Thm:Nelson2}. 

It follows from \cite[Theorem 1.2]{Kuz} that the Cuntz-Toeplitz algebra $T_q(\Hil) \subset B(\FockqH)$, i.e.\ the $C^*$-algebra generated by the left creation operators $\{l^{\ast}_{\xi}: \xi \in \Hil\}$, is nuclear, as an extension of the Cuntz algebra by compacts. Arguing exactly as in \cite[Section 4]{Dima2} we can thus deduce that $\mlg$ satisfies the Akemann-Ostrand (AO) property. This further implies by \cite[Theorem A]{HR} that $\mlg$ is $\omega$-solid, where $\omega$ denotes a fixed non-principal ultrafilter and hence is solid (see the definition of $\omega$-solidity in \cite[Section 1]{HR}). Theorems \ref{Thm:Nelson} and \ref{Thm:Nelson2} together with   Corollary \ref{cor:freeFisher} imply  that the centralizer of $\mlg$  with respect to the canonical state  is a non-injective II$_1$ factor. We can thus invoke \cite[Proposition 3.10]{HR} to conclude fullness when $\mlg$ is a type III$_1$ factor.
\end{proof}

\begin{rem}
By \cite[Theorem 6.2]{HI} $q$-Araki-Woods factors are full if $(U_t)_{t \in \br}$  has a weakly mixing part. The same theorem also covers some almost periodic examples, but if the eigenvalues of the generator of $(U_t)_{t \in \br}$  grow sufficiently fast then fullness remains an open problem.
\end{rem}

Recall that in \cite[Corollary 3.3]{Cas} Caspers showed that the $q$-Gaussian von Neumann algebra  $\Gamma_q(\ell^2)$ is not isomorphic to $\Gamma_0(\ell^2)$ if $q \neq 0$, using the $W^*$(AO) property. We would like to finish the paper with formulating a tracial non-isomorphism result which can be proved along the similar lines, following \cite{Taka} and \cite{Cas}, and state a natural conjecture.

\begin{prop}\label{prop:AO}
	Suppose that $\mlg$ is a II$_1$-factor. Then if $\mlg$ satisfies the (AO) property, it also satisfies the $W^*$(AO) property.
\end{prop}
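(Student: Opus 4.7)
The plan is to follow the template laid out by Takahashi \cite{Taka} and adapted by Caspers \cite{Cas}, exploiting the inclusion $K(L^2(\mlg))\subseteq \mathbb{K}_\mlg$ that is built into the definition of $\mathbb{K}_\mlg$.

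First, I would upgrade the given (AO) data to a min-continuous $*$-homomorphism at the level of the coarser quotient. Since $K(L^2(\mlg))\subseteq \mathbb{K}_\mlg$, the canonical quotient $B(L^2(\mlg))/K(L^2(\mlg)) \twoheadrightarrow B(L^2(\mlg))/\mathbb{K}_\mlg$ is a well-defined $*$-homomorphism, and composing with the (AO) map $\theta$ yields a continuous $*$-homomorphism $\theta':\alg \otimes_{\min}\blg \to B(L^2(\mlg))/\mathbb{K}_\mlg$.

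Next, I would extend $\theta'$ pointwise to all of $\mlg \odot \mlg'$ by sending $a\otimes b$ to $ab + \mathbb{K}_\mlg$. The key input here is the characteristic property of $\mathbb{K}_\mlg$ established in \cite{Taka}: the restrictions to $\mlg$ and to $\mlg'$ of the quotient map $B(L^2(\mlg))\to B(L^2(\mlg))/\mathbb{K}_\mlg$ are $\sigma$-weakly continuous on bounded sets. Combined with Kaplansky density for the weak$^*$-dense inclusions $\alg\subseteq \mlg$ and $\blg \subseteq \mlg'$, this gives bounded nets from $\alg$ and $\blg$ converging $\sigma$-weakly to any prescribed $a \in \mlg, b \in \mlg'$, whose images under the quotient converge in the weak operator topology in $B(L^2(\mlg))/\mathbb{K}_\mlg$. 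A standard diagonal argument then gives a well-defined $*$-homomorphism $\tilde{\theta} : \mlg\odot\mlg' \to B(L^2(\mlg))/\mathbb{K}_\mlg$ extending $\theta'$.

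Finally, I would promote this pointwise extension to a min-norm bound. For any $x \in \mlg \odot \mlg'$ one picks a finite-dimensional operator subsystem $E \subseteq \mlg$ and $F\subseteq \mlg'$ containing the left and right legs of $x$. By the principle of local reflexivity applied to $\alg$ (viewing $\mlg$ inside $\alg^{**}$), one produces nets of completely contractive maps $E \to \alg$ and, by a parallel argument on the commutant side, $F \to \blg$, converging point-$\sigma$-weakly to the inclusion. Pushing $x$ through these approximations and using the min-continuity of $\theta'$ established in the first step together with the weak continuity from the second step yields
\[
\|\tilde{\theta}(x)\|_{B(L^2(\mlg))/\mathbb{K}_\mlg}\;\leqslant\;\|x\|_{\mlg\otimes_{\min}\mlg'},
\]
which is exactly the $W^*$(AO) property.

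The main obstacle, as in the original papers, is ensuring that the approximation by local reflexivity interacts correctly with both the passage to the quotient modulo $\mathbb{K}_\mlg$ and the minimal tensor norm on $\mlg\otimes\mlg'$; neither of these tensor norm estimates survives naive weak$^*$-limits without the very specific construction of $\mathbb{K}_\mlg$ and the local reflexivity hypothesis on $\alg$. Once one unpacks those ingredients, however, the argument reduces to verifying that the setup here matches the one in \cite{Taka} and quoting the relevant estimates essentially verbatim.
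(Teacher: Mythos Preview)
Your proposal is correct and follows the same approach as the paper: both simply defer to the arguments of \cite{Taka} and \cite{Cas}, observing that nothing in those proofs is specific to group von Neumann algebras once one has the defining $\sigma$-weak continuity property of $\mathbb{K}_\mlg$. One small caveat in your sketch: the (AO) hypothesis only assumes local reflexivity of $\alg$, not of $\blg$, so the ``parallel argument on the commutant side'' producing maps $F\to\blg$ is not available as stated; in \cite{Taka} and \cite{Cas} the two variables are handled asymmetrically---one first extends from $\blg$ to $\mlg'$ using only Kaplansky density together with the $\sigma$-weak continuity of the quotient map on $\mlg'$, and only then invokes local reflexivity of $\alg$ to pass from $\alg$ to $\mlg$---but this does not affect your overall strategy.
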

\begin{proof}
	It suffices to adopt the proofs of \cite[Theorem, Section 4]{Taka} and \cite[Theorem 2.2]{Cas}, which are formulated in the language of group von Neumann algebras, to the current situation. All the arguments remain valid.
\end{proof}

\begin{cor}
Suppose that $q, q' \in (-1,1)$, $q \neq 0$, and $\Hil_\br$ is a finite-dimensional real Hilbert space. Then the von Neumann algebras $\Gamma_q(\ell^2)$ and $\Gamma_{q'} (\Hil_\br)$ are non-isomorphic.
\end{cor}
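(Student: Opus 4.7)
The plan is to use the $W^*$(AO) property, introduced just above, as a non-isomorphism invariant, in complete analogy with Caspers' argument in \cite[Corollary 3.3]{Cas} distinguishing $\Gamma_q(\ell^2)$ from $\Gamma_0(\ell^2)$. Since the $W^*$(AO) condition is formulated in terms of the standard form of the II$_1$-factor and the canonical compact ideal $\mathbb{K}_\mlg$, it is preserved under von Neumann algebra isomorphism, so exhibiting it on one side and its failure on the other suffices.

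First I would dispose of the trivial case $\dim \Hil_\br = 1$: then $\Gamma_{q'}(\Hil_\br)$ is generated by a single self-adjoint operator and is abelian, hence certainly not isomorphic to the II$_1$-factor $\Gamma_q(\ell^2)$. So we may assume $\dim \Hil_\br \geq 2$, in which case Ricard's theorem (or the tracial instance of Theorem \ref{thm:factor}) tells us that $\Gamma_{q'}(\Hil_\br)$ is a II$_1$-factor.

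Next I would show that $\Gamma_{q'}(\Hil_\br)$ satisfies the $W^*$(AO) property. By \cite[Theorem 1.2]{Kuz} the Cuntz--Toeplitz algebra $T_{q'}(\Hil)$ is nuclear as soon as $\Hil$ is finite dimensional, since it is an extension of a Cuntz algebra by compacts. Arguing exactly as in the proof of Theorem \ref{thm:full} — following the scheme of \cite[Section 4]{Dima2} — this nuclearity delivers the (AO) property for $\Gamma_{q'}(\Hil_\br)$. Since $\Gamma_{q'}(\Hil_\br)$ is a II$_1$-factor, Proposition \ref{prop:AO} then upgrades (AO) to $W^*$(AO).

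Finally, for the other side, I would invoke Caspers: the content of the proof of \cite[Corollary 3.3]{Cas} is precisely that $\Gamma_q(\ell^2)$ fails to satisfy $W^*$(AO) whenever $q \neq 0$. Combining the two, an isomorphism between $\Gamma_q(\ell^2)$ and $\Gamma_{q'}(\Hil_\br)$ would transport $W^*$(AO) from the latter to the former, contradicting Caspers. The main, and in fact only, point requiring care is to verify that the reduction of (AO) to $W^*$(AO) given by Proposition \ref{prop:AO} truly interfaces with the obstruction used by Caspers — that is, that the compact ideal $\mathbb{K}_\mlg$ and the standard representation are the same objects on both sides of the comparison. This amounts to tracking definitions and does not involve any genuine new estimate, so I expect the argument to go through essentially by citation.
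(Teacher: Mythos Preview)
Your proposal is correct and follows essentially the same route as the paper: establish (AO) for $\Gamma_{q'}(\Hil_\br)$ via nuclearity of $T_{q'}(\Hil)$ and Shlyakhtenko's argument, upgrade to $W^*$(AO) via Proposition~\ref{prop:AO} using factoriality, and contrast with Caspers' failure of $W^*$(AO) for $\Gamma_q(\ell^2)$ when $q\neq 0$. The only cosmetic difference is that the paper cites \cite[Theorem 3.2]{Cas} directly for the failure of $W^*$(AO), whereas you point to the proof of \cite[Corollary 3.3]{Cas}; the content is the same.
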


\begin{proof}
We can assume that 
$\dim (\Hil_\br) \geqslant 2$. In \cite[Theorem 3.2]{Cas} Caspers shows that $\Gamma_q(\ell^2)$ does not have the $W^*$(AO) property. The arguments discussed in the proof of Theorem \ref{thm:full} show that  $\Gamma_{q'} (\Hil_\br)$ has the (AO) property; as by 
\cite{Eric} (or Theorem \ref{thm:factor} above)  $\Gamma_{q'} (\Hil_\br)$ is a finite factor, Proposition \ref{prop:AO} shows that it also has the $W^*$(AO) property. This conludes the proof.
\end{proof}

\begin{con}
Let $q, q' \in (-1,1)$, $ q \neq 0$, let $(U_t)_{t \in \br}$ be a group of orthogonal transformations of $\ell^2$, let $\Hil_\br$ be a finite-dimensional real Hilbert space, and let $(V_t)_{t \in \br}$ be a group of orthogonal transformations of $\Hil_\br$.
Then $q$-Araki-Woods von Neumann algebras $\Gamma_q(\ell^2, U_t)$	and $\Gamma_q(\Hil_\br, V_t)$	are non-isomorphic.
\end{con}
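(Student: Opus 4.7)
The plan mirrors the strategy of the preceding corollary: I would try to identify a $W^*$-isomorphism invariant satisfied by $\Gamma_{q'}(\Hil_\br, V_t)$ for every finite-dimensional real Hilbert space $\Hil_\br$ and every $(V_t)_{t \in \br}$, but violated by $\Gamma_q(\ell^2, U_t)$ for every $q \neq 0$ and every $(U_t)_{t \in \br}$. Since both classes of algebras can be genuinely type III, the invariant cannot be the $W^*$(AO) property as stated in the text, and should instead be an appropriate non-tracial analogue---most naturally, a property of the continuous core $\widetilde{\mlg} = \mlg \rtimes_{\sigma^{\varphi}} \br$, or of a finite-trace compression thereof.

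The positive direction looks the more tractable of the two. The proof of Theorem \ref{thm:full} already shows that in the finite-dimensional case the Cuntz-Toeplitz algebra $T_{q'}(\Hil)$ is nuclear by \cite[Theorem 1.2]{Kuz}, which yields the (AO) property for $\Gamma_{q'}(\Hil_\br, V_t)$ via the Shlyakhtenko framework of \cite{Dima2}. I would then upgrade this to $W^*$(AO) of a finite-trace compression of $\widetilde{\mlg}$ by adapting the arguments of \cite{Taka} and \cite{Cas} to the semifinite setting---effectively a non-tracial version of Proposition \ref{prop:AO}.

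The negative direction is where the main obstacle lies. For trivial $(U_t)$, the failure of $W^*$(AO) for $\Gamma_q(\ell^2)$ is Caspers' theorem \cite[Theorem 3.2]{Cas}, but that proof hinges on combinatorial norm estimates that use the trace and the undeformed $\ell^2$-inner product. For general $(U_t)$ the inner product on the one-particle space is twisted by the operator $2A/(1+A)$, which distorts the norm growth exploited in Caspers' argument, and one must also propagate the obstruction into the continuous core. Adapting the combinatorics of \cite{Cas} to this twisted setting, or extracting the obstruction directly at the level of the core, is the principal difficulty. A helpful intermediate step might be to first treat the almost periodic case, where $\widetilde{\mlg}$ admits a discrete description via the crossed product by the subgroup of $\br^{\times}_{\ast}$ generated by the spectrum of $A$, before attempting the general weakly mixing case where the core has a continuous flavour.
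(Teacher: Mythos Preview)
The statement you are addressing is a \emph{Conjecture} in the paper, not a theorem; the authors do not supply a proof and explicitly leave it open. There is therefore no ``paper's own proof'' to compare against. Your text is not a proof either, and you correctly present it as a research outline rather than a completed argument: you identify a plausible invariant (a non-tracial/core-level analogue of the $W^*$(AO) property), explain why the positive side for finite-dimensional $\Hil_\br$ should go through by combining the nuclearity of $T_{q'}(\Hil)$ from \cite{Kuz}, the (AO) argument of \cite{Dima2}, and a semifinite adaptation of Proposition~\ref{prop:AO}, and then isolate the genuine obstruction on the negative side, namely that Caspers' combinatorial estimates in \cite{Cas} rely on the undeformed tracial setting and would need to be reworked for the twisted inner product and propagated to the continuous core.

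This is a sensible and well-informed sketch of how one might attack the conjecture, and your identification of the almost periodic case as a natural first step is reasonable. But to be clear about its status: nothing here constitutes a proof, the key negative-direction step (failure of the relevant invariant for $\Gamma_q(\ell^2,U_t)$ with nontrivial $U_t$) is not established anywhere in the literature cited, and the paper itself regards the result as open.
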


\smallskip

\noindent {\bf Acknowledgments. } 
A.S.\ and M.K.\ were  partially supported by the National Science Center (NCN) grant no. 2020/39/I/ST1/01566.
M.W.\ was  partially supported by the National Science Center (NCN) grant no. 2021/43/D/ST1/01446. The project is co-financed by the Polish National Agency for Academic
Exchange within Polish Returns Programme. 
We are grateful to Cyril Houdayer, Kunal Mukherjee and Simeng Wang for their comments on the first draft of this note.

\vspace{5 pt}
\includegraphics[scale=0.5]{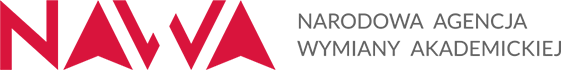}

\end{document}